\let\subset\subseteq 
\let\eps\varepsilon
\let\rho\varrho
\def\NN{\mathbb{N}}
\newcommand{\JUSTIFY}[1]{\fbox{\tiny{#1}}\quad}
\def\tower{\mathsf{tow}}
\def\NIn{\mathsf{N}^\mathsf{in}}
\def\NOut{\mathsf{N}^\mathsf{out}}
\newtheorem{theorem}{Theorem}
\newtheorem{lemma}[theorem] {Lemma}
\newtheorem{fact}[theorem]{Fact} 
\newtheorem{observation}[theorem] {Observation}
\newtheorem{construction}[theorem] {Construction} 
\newtheorem{claimNewNumbering}{Claim}
\newtheorem{claim}[claimNewNumbering]{Claim}
\newtheorem*{claim*}{Claim}
\newtheorem*{fact*}{Fact} 
\newtheorem{definition}[theorem] {Definition} 
\newtheorem*{remark*} {Remark}
\newcommand{\By}[2]{\overset{\mbox{\tiny{#1}}}{#2}}
\newcommand{\ByRef}[2]{   \By{\eqref{#1}}{#2} }
\newcommand{\eqByRef}[1]{ \ByRef{#1}{=} }
\newcommand{\geByRef}[1]{ \ByRef{#1}{\ge} }
\renewcommand{\leq}{\leqslant}
\renewcommand{\le}{\leqslant}
\renewcommand{\geq}{\geqslant}
\renewcommand{\ge}{\geqslant}
\renewcommand{\epsilon}{\varepsilon}
\title{A tower lower bound for the degree relaxation of the Regularity Lemma}
\author{Frederik Garbe}
\address{Heidelberg University, Faculty of Mathematics and Computer Science, Im Neuenheimer Feld 205, 
69120 Heidelberg, Germany}
\email{garbe@informatik.uni-heidelberg.de}
\author{Jan Hladký}
\address{Institute of Computer Science of the Czech Academy of Sciences, Pod Vod\'{a}renskou v\v{e}\v{z}\'{\i} 2, 182~07 Prague, Czechia. With institutional support RVO:67985807.}
\email{hladky@cs.cas.cz}
\thanks{Research supported by Czech Science Foundation Project 21-21762X (J. Hladk\'y) and by the Deutsche Forschungsgemeinschaft (DFG, German Research Foundation) - 428212407 (F. Garbe).}
\date{}
\begin{document}
\begin{abstract}
It is well-known that if $(A,B)$ is an $\tfrac{\eps}{2}$-regular pair (in the sense of Szemerédi) then there exist sets $A'\subset A$ and $B'\subset B$ with $|A'|\le \eps|A|$ and $|B'|\le \eps|B|$ so that the degrees of all vertices in $A\setminus A'$ differ by at most $\eps|B|$ and the degrees of all vertices in $B\setminus B'$ differ by at most $\eps|A|$. We call such a property \emph{$\eps$-degularity}. This leads to the notion  of an \emph{$\eps$-degular partition} of a graph in the same way as the definition of $\eps$-regular pairs leads to the notion of $\eps$-regular partitions. 

We show that there exist graphs in which any $\eps$-degular partition requires the number of clusters to be $\mathrm{tower}(\Theta(\eps^{-1/3}))$. That is, even though degularity is a substantial relaxation of regularity, in general one cannot improve much on the bounds that come with Szemerédi's regularity lemma.
\end{abstract}
\maketitle
\section{Introduction}
\subsection{Regularity Lemma}
A key concept of the famous Regularity Lemma of Szemerédi~\cite{Sze:ReguLemma} from 1976 is that of a regular pair. To this end, we recall that for two nonemtpy sets $X$ and $Y$ of vertices of a graph $G$, the \emph{density of $(X,Y)$} is defined as $d_G(X,Y)=\frac{e(X,Y)}{|X||Y|}$. A pair $(A,B)$ of disjoint nonempty sets of vertices is \emph{$\epsilon$-regular} if 
\begin{equation}\label{eq:defregular}
\left|d_G(A,B)-d_{G}(A',B')\right|\le \eps\;\mbox{for every $A'\subset A$ and $B'\subset B$ with $|A'|>\eps|A|$ and $|B'|>\eps|B|$.}
\end{equation}
Regular pairs are the building blocks of regular partitions. More precisely, if $G$ is a graph, we say that a partition $V(G)=V_1\sqcup \ldots \sqcup V_\ell$, where the sets $V_1,\ldots,V_\ell$ are called \emph{clusters}, is an \emph{$\eps$-regular partition of $G$ of complexity $\ell$} if we have
\begin{enumerate}[label=(P\arabic*)]
	\item\label{P:2} $\big| |V_i|-|V_j|\big|\le 1$ for every $i,j\in[\ell]$.
	\item\label{P:3} For each $i\in [\ell]$ for at least $(1-\eps)\ell$ indices $j\in[\ell]\setminus\{i\}$ we have that $(V_i,V_j)$ is an $\eps$-regular pair.
\end{enumerate}
We note that several slightly different, but essentially equivalent, definitions of regular partitions appear in the literature, each of which may have advantages in specific contexts. We discuss two such variants in Sections~\ref{ssec:May1} and~\ref{ssec:May2}.

Observe that~\ref{P:3} forces that 
\begin{equation}\label{eq:Eindhoven}
    \ell\ge \eps^{-1}\;,
\end{equation}
which is often a convenient lower bound for applications of the regularity method. 

The Regularity Lemma then says that for each $\eps>0$ the number
\begin{equation*}
R(\eps):=\sup\{\min\{\ell\::\:\text{$G$ has an $\eps$-regular partition of complexity $\ell$}\}\;:\;\text{$G$ is a graph}\}
\end{equation*}
is finite. This statement, which has been central to extremal graph theory, information theory, and theoretical computer science, naturally, calls for studying the value $R(\eps)$. Szemerédi's proof gives an upper bound $R(\eps)\le \tower(\Theta(\eps^{-5}))$. Here,  $\tower:[0,\infty)\to\NN$ is the \emph{tower function}, defined as $\tower(x)=1$ for $x\in[0,1)$ and $\tower(1+x)=2^{\tower(x)}$ for $x\ge 1$.\footnote{This operation is sometimes also called `tetration'.}

It was a major question to obtain any nontrivial lower bound on $R(\eps)$. A breakthrough was achieved by Gowers~\cite{MR1445389} who obtained a qualitatively matching lower bound $R(\eps)>\tower(\Theta(\eps^{-1/16}))$. In fact, Gowers's paper has two parts. In the first one, he gives a bound $R(\eps)>\tower(\Theta(\log(\frac1\eps)))$ using a fairly short argument. In the second part, which was called a \emph{tour de force} by Bollobás in his official description of the work for Gowers's Fields medal~\cite{MR1660652}, he then proves the already mentioned bound $R(\eps)>\tower(\Theta(\eps^{-1/16}))$. Subsequently, Gowers's construction was streamlined by Moshkovitz and Shapira~\cite{MR3516883} who based on the previously mentioned short argument proved the lower bound $R(\eps)>\tower(\Theta(\eps^{-1/2}))$.
The state of the art is~\cite{MR3737374} in which a tight bound $R'(\epsilon)=\tower(\Theta(\eps^{-2}))$, where $R'(\epsilon)$ is a slight variant of $R(\epsilon)$ defined using the `aggregate form' described in Section~\ref{ssec:May1}. 
Note that \cite{MR1445389} and \cite{MR1723039} obtain lower-bounds in a finer setting in which the three different roles of $\epsilon$ --- the tolerance of density deviation in~\eqref{eq:defregular}, the lower bound on the size of the sets $A'$ and $B'$ in~\eqref{eq:defregular}, and the proportion of regular pairs quantified in~\ref{P:3} --- are considered as separate parameters.

The importance of the Regularity Lemma (and its most important consequence, the Removal Lemma) initiated a whole line of research and led to many decomposition statements which are more or less related. One of the most prominent ones is the Weak Regularity Lemma of Frieze and Kannan~\cite{MR1723039}. The main upshot of the Weak Regularity Lemma is that the number of clusters in this type of regularization is guaranteed to be only $2^{O(\eps^{-2})}$, and this bound is known to be almost tight,~\cite{MR3811511}. Quantitative aspects of other regularity lemmas are obtained and surveyed in~\cite{MR2989432}.
Another recent example how to achieve a partition with a substantially smaller number of cluster, but for a weaker regularity-type property, can be found in~\cite{Cs21}. 

\subsection{Regularity versus degularity}
The usage of the word \emph{regular} is somewhat unfortunate as it traditionally refers to Szemer\'edi's concept of partition's and at the same time to another property of all the degrees being equal.\footnote{Some authors in the 1990's used the term \emph{Uniformity lemma} instead of the Regularity Lemma, see e.g.~\cite{MR1204114,MR1445389}.} Throughout the paper, the term \emph{regular pair} or \emph{regular partition} refers to to the former concept whereas the term \emph{regular graph} refers to the later. In this paper we study the link between these two properties.

We call a pair $(A,B)$ of disjoint vertex sets of a graph \emph{$\epsilon$-degular} if there exist sets $A^-\subset A$ and $B^-\subset B$ with $|A^-|\le \eps|A|$ and $|B^-|\le \eps|B|$ such that
\begin{align}
	\begin{split}
		\label{eq:defdegular}
		\left|\deg(a,B)-\frac{e(A,B)}{|A|}\right|\le \eps|B| 
  \mbox{ and }
  \left|\deg(b,A)-\frac{e(A,B)}{|B|}\right|\le \eps|A|\;,
	\end{split}
\end{align}
for every $a\in A\setminus A^-$ and $b\in B\setminus B^-$.

It is well-known that an $\eps$-regular pair is $2\eps$-degular. For completeness we recall a proof of this fact.
\begin{fact}\label{fact:regulardegular}
	Every $\eps$-regular pair $(A,B)$ is also $2\eps$-degular. In particular, for the sets $A_1:=\{a\in A: \deg(a,B)<\frac{e(A,B)}{|A|}-\eps|B|\}$ and $A_2:=\{a\in A: \deg(a,B)>\frac{e(A,B)}{|A|}+\eps|B|\}$ we have $|A_1|,|A_2|\le \eps|A|$. By symmetry, we have a similar property for $B$.
\end{fact}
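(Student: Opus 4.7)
The plan is to prove the ``in particular'' clause first, as this directly yields $2\eps$-degularity by setting $A^-:=A_1\cup A_2$ (and symmetrically for $B^-$).

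To bound $|A_2|$, I would proceed by contradiction: suppose $|A_2|>\eps|A|$. Summing the inequality $\deg(a,B)>\tfrac{e(A,B)}{|A|}+\eps|B|$ over $a\in A_2$ gives
\[
e(A_2,B)>|A_2|\cdot\left(\tfrac{e(A,B)}{|A|}+\eps|B|\right),
\]
and dividing by $|A_2|\,|B|$ yields $d_G(A_2,B)>d_G(A,B)+\eps$. Since trivially $|B|>\eps|B|$ (we may assume $\eps<1$, else the fact is vacuous), applying the definition of $\eps$-regularity in~\eqref{eq:defregular} with $A':=A_2$ and $B':=B$ produces the inequality $|d_G(A_2,B)-d_G(A,B)|\le\eps$, a contradiction. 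The argument for $|A_1|\le\eps|A|$ is symmetric (giving $d_G(A_1,B)<d_G(A,B)-\eps$), and swapping the roles of $A$ and $B$ handles the two exceptional sets on the $B$-side.

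To deduce $2\eps$-degularity, set $A^-:=A_1\cup A_2$ and define $B^-$ analogously. Then $|A^-|\le 2\eps|A|$ and $|B^-|\le 2\eps|B|$ by the previous paragraph, and every $a\in A\setminus A^-$ satisfies $\bigl|\deg(a,B)-\tfrac{e(A,B)}{|A|}\bigr|\le\eps|B|\le 2\eps|B|$ by the definitions of $A_1,A_2$, which verifies~\eqref{eq:defdegular} with parameter $2\eps$; the second inequality in~\eqref{eq:defdegular} follows symmetrically.

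There is no real obstacle here: the argument is a textbook application of regularity to the ``bad'' subsets $A_1,A_2$, and the only minor point to watch is the harmless boundary assumption $\eps<1$ needed to use $B$ itself as the witness set $B'$ in~\eqref{eq:defregular}.
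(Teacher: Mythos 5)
Your argument is correct and follows essentially the same route as the paper: you show (by contradiction) that if $|A_2|>\eps|A|$, then $(A_2,B)$ would witness $\eps$-irregularity, exactly mirroring the paper's treatment of $A_1$. The only cosmetic difference is that you spell out the (straightforward) deduction of $2\eps$-degularity via $A^-:=A_1\cup A_2$, which the paper leaves implicit.
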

\begin{proof}
	We will only prove that $|A_1|\le \eps|A|$. The same argument can be used to bound $|A_2|$. 
	If $A_1\neq\emptyset$ then
	\begin{align*}
		\frac{e(A,B)}{|A||B|}-\frac{e(A_1,B)}{|A_1||B|}
		=
		\frac{e(A,B)}{|A||B|}-\frac{\sum_{a\in A_1}\deg(a,B)}{|A_1||B|}
		>
		\eps\;.
	\end{align*}
	If we had $|A_1|> \eps|A|$ then we could use the pair  $(A_1,B)$ in~\eqref{eq:defregular} as a witness of $\eps$-irregularity of $(A,B)$, which would be a contradiction.
\end{proof}

If we replace in~\ref{P:3} the notion of $\eps$-regular pairs by that of $\eps$-degular pairs then~\ref{P:2} and~\ref{P:3} lead to the notion of an \emph{$\eps$-degular partition of $G$ of complexity $\ell$}.

As we saw, regularity is at least as strong as degularity. But in fact, often it is much stronger. To see this, consider a pair $(A,B)$ where $A=A^1\sqcup A^2$ and $B=B^1\sqcup B^2$ with $|A^1|=|A^2|$ and $|B^1|=|B^2|$ such that for $i=1,2$, the pair $(A^i,B^i)$ is complete and the pair $(A^i,B^{3-i})$ is edgeless. The pair $(A,B)$ is $0$-degular while it is not even $0.499$-regular. Indeed, this example can be used to show that key features from the theory of the Regularity Lemma, such as the Blow-up lemma or the Counting lemma, fail badly when relaxed to degular pairs. This example also shows that the key property that a non-negligible subpair of a regular pair is again regular with a similar density, does not hold for degularity. Note that we show in Fact~\ref{fact:degoneside} that a one-sided counterpart of the statement holds, though.

There are a number of complex applications of regularity in which some steps actually utilize only the degularity property rather than the full regularity property. Take for example, `Łuczak's connected matching argument', introduced in~\cite{MR1676887} and used many times since to establish the existence of a long path in a graph $G$ from the fact that its cluster graph $\mathbf{G}$ contains a large matching $\mathbf{M}$ in one connected component.\footnote{`Cluster graph' on the vertex set $\{V_1,\ldots,V_\ell\}$ whose edges correspond to those pairs that are regular, and of sufficient edge density.} The argument has three parts. In the first part, connectivity of $\mathbf{G}$ is used to find a system $\mathcal{S}$ of vertex-disjoint paths in $G$ between each two edges of $\mathbf{M}$. In the second part, each regular pair corresponding to an edge $\mathbf{e}\in\mathbf{M}$ is filled-up with a path $P_\mathbf{e}$ in $G$. In the third part, the paths of $(P_\mathbf{e})_{\mathbf{e}\in\mathbf{M}}$ are merged using $\mathcal{S}$ into a single desired path. While for the second part, one needs the full strength of regular pairs, the system of paths $\mathcal{S}$ in the first part can be planted using a single one-step extension strategy for which the degularity property is sufficient. 
There are examples where degularity is the only property used from an application of the Regularity Lemma. This point is made explicitly in~\cite{MR4604288}, which we quickly and briefly recall. The concept studied in~\cite{MR4604288} is fractional isomorphism. This is a well-studied concept in graphs, and recently a graphon counterpart was introduced in~\cite{MR4482093}. The main result is that if $U$ and $W$ are two fractionally isomorphic graphons, then $U$ can be approximated in the cut distance by a graph $G$ and $W$ can be approximated in the cut distance by a graph $H$ in a way that $G$ and $H$ are fractionally isomorphic. The proof starts with a regularization of a `common equitable partition' of $U$ and $W$.\footnote{The theory of regularizations of graphons is analogous to the theory of regularizations of graphs.} The regularization is one of the two main steps in building $G$ and $H$. Since fractional isomorphism is a concept which is heavily based on degrees and much less on the actual structure, the regularization could be replaced by degularization. We believe that with minor changes, degularity could similarly replace regularity also in~\cite{MR2599196} and~\cite{MR3876899}. 

\subsection{Main result: Lower bound on the number of clusters in degular partitions}
Fact~\ref{fact:regulardegular} tells us that every $\eps$-regular partition is also $2\eps$-degular, that is, for the number
\begin{equation*}
	D(\eps):=\sup\{\min\{\ell\::\:\text{$G$ has an $\eps$-degular partition of complexity $\ell$}\}\;:\;\text{$G$ is a graph}\}
\end{equation*}
we have $D(\eps)\le R(\eps/2)$, yielding in particular $D(\eps)\le\tower(\Theta(\eps^{-5}))$. Given how much weaker the degularity property is compared to regularity --- as we illustrated above and also in Section~\ref{ssec:degpartdeggraph} --- one could hope for a much better bound. Our main result shows that such a hope is in vain.
\begin{theorem}\label{thm:main}
We have $D(\eps)\ge\tower(\Theta(\eps^{-1/3}))$.
\end{theorem}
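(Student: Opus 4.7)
The plan is to adapt the tower-type lower bound constructions for the regularity lemma, due to Gowers and streamlined by Moshkovitz--Shapira, to the weaker notion of degularity. At the top level I would construct a graph $G=G_k$ equipped with a nested sequence of partitions $\mathcal{P}_0\prec\mathcal{P}_1\prec\cdots\prec\mathcal{P}_k$ with $k=\Theta(\eps^{-1/3})$ and $|\mathcal{P}_i|=\tower(i)$, and then prove a ``separation lemma'' asserting that any $\eps$-degular partition $\mathcal{Q}$ of $G$ essentially refines $\mathcal{P}_k$, which forces $|\mathcal{Q}|\ge \tower(\Theta(\eps^{-1/3}))$.

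The construction would be recursive: assuming $G_{i-1}$ together with $\mathcal{P}_{i-1}$ has been built, I would take many disjoint blown-up copies of $G_{i-1}$ to form $G_i$ and then insert, between every two $\mathcal{P}_{i-1}$-cells, a bipartite graph whose restriction to the sub-clusters that will form $\mathcal{P}_i$ has vertex degrees differing by significantly more than $\eps$ times the cluster size. Unlike the regularity setting, where it is enough to arrange that \emph{some} subset of a cell exhibits density deviation, here the deviations must be visible already in the degrees of individual vertices and --- crucially --- must not vanish after averaging, so that the complete/empty sub-block pathology described after Fact~\ref{fact:regulardegular} cannot certify spurious degularity. Suitable bipartite patterns can be sought among random bipartite graphs with prescribed vertex degrees, or among Cayley-type graphs over $\mathbb{Z}/p\mathbb{Z}$ as in the existing regularity constructions.

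The separation lemma itself would be proved by induction on $i$: if a cell $Q\in\mathcal{Q}$ meets two distinct $\mathcal{P}_i$-cells in non-negligible proportion, then by construction there is a cell $Q'\in\mathcal{Q}$ on the $\mathcal{P}_{i-1}$-scale with respect to which the degrees from $Q$ split into two bands differing by more than $\eps|Q'|$, contradicting degularity of $(Q,Q')$; the exceptional sets are aggregated across $i$ using property~\ref{P:3}. The main obstacle, and the reason for the $\eps^{-1/3}$ exponent as opposed to the $\eps^{-1/2}$ achieved for regularity by Moshkovitz--Shapira, is precisely this step: Fact~\ref{fact:regulardegular} gives only a one-way implication, and degularity cannot be certified by a subset-of-density deviation, so translating a merging of $\mathcal{P}_i$-cells into a genuine degree spread at the next scale loses a factor. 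A careful balance between per-level density gaps (which must remain small enough to be invisible at coarser scales) and their cumulative degree footprint (which must still exceed $\eps$) yields $k=\Theta(\eps^{-1/3})$, and making this balance rigorous is the technical heart of the argument.
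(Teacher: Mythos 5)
You have correctly identified the high-level strategy (adapting the nested-partition construction of Moshkovitz--Shapira), the central obstruction (the ``subpair of a regular pair is regular'' property, used to propagate refinement across levels, fails for degularity), and even the right target for the fix (degree deviations must be visible at the level of individual vertices and must not cancel after averaging). However, the proposal openly defers the actual fix --- it calls making this precise ``the technical heart'' --- and what it does sketch is not yet a construction. Gesturing at ``random bipartite graphs with prescribed vertex degrees'' or ``Cayley-type graphs'' does not by itself prevent the pathology: in the construction of~\cite{MR3516883} one can add up the per-level density deviations in a way that symmetrizes degrees, which is exactly why that graph has a cheap degular partition (Section~\ref{ssec:ComplexSimple}). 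You need a mechanism that makes the level-$r$ degree signal survive the superposition of all other levels.

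The paper achieves this via three interlocking ingredients that your proposal does not supply. First, an \emph{orientation} structure (bipartite tournaments $T_r$ on the cells of $\mathcal{X}_r$), so that for a vertex $x\in X_i$ and a target cell $X_j$, the weight inserted at level $r+1$ depends asymmetrically on whether the arc is $X_i\to X_j$ or $X_j\to X_i$. Second, an upgrade of balanced bipartitions to \emph{separators} (Definition~\ref{def:sep}), whose extra property~\ref{sep2} forces each vertex to receive the weight $\delta$ in exactly half of the relevant in-arcs; this yields the exact degree identity $d_{G_{r+1}}(x,X)+\cdots+d_{G_s}(x,X)=\tfrac12\delta(s-r)$ of Observation~\ref{obs:deg}, which is what guarantees the degree gap created at level $r$ is not washed out by coarser or finer levels. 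Third, a one-sided substitute for the subpair property, Fact~\ref{fact:degoneside}: for an $\eps$-degular pair $(A,B)$ and $X\subseteq A$ with $|X|\ge\mu|A|$, one has $|d(X,B)-d(A,B)|\le(1+1/\mu)\eps$. This is what converts the degree gap between $Z_A=Z_0\cap A^r_{j^*,i}$ and $Z_B=Z_0\cap(X_{j^*}\setminus A^r_{j^*,i})$ into many non-degular pairs $(Z,Z_0)$ in Lemma~\ref{lem:betaref}. Without these three pieces the induction you sketch cannot be carried out, and indeed the exponent $\eps^{-1/3}$ is the byproduct of balancing the noise $\eps/\mu$ from Fact~\ref{fact:degoneside} against the signal $\delta$, the accumulated refinement error $\mu s$ against $\delta$, and the total weight $s\delta$ against a constant --- a three-way tradeoff rather than the ``loses a factor'' heuristic in your write-up. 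In short: correct diagnosis of the obstacle, but the cure is missing, and the cure is where the theorem lives.
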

\subsubsection*{About the proof}
The proof of Theorem~\ref{thm:main} builds heavily on the paper of Moshkovitz and Shapira~\cite{MR3516883} which gives the lower bound $R(\eps)\ge\tower(\Theta(\eps^{-1/2}))$. The main idea of \cite{MR3516883} (which is in turn taken from the the seminal paper~\cite{MR1445389}) for the construction of a graph with no $\eps$-regular partitions of complexity less than $\tower(\Theta(\eps^{-1/2}))$ is the following. We partition the vertex set into a constant number of blobs. Then in $L=\Theta(\eps^{-1/2})$ many rounds, we partition each current blob into a number of subblobs, the number of subblobs being exponential in the number of current blobs. Overall, this results in $\tower(\Theta(\eps^{-1/2}))$ blobs on the $L$-th level. The edges are then ingeniously defined based on this nested blob structure. The key inductive step in the proof of the lower bound is the following property (loosely):
\begin{enumerate}
\item[$(\heartsuit)$] If an $\eps$-regular partition (almost) refines blobs of some level $\ell<L$ then it also (almost) refines blobs on level~$\ell+1$.
\end{enumerate}
This key feature~$(\heartsuit)$ relies, besides the combinatorial definition of the edges, on a basic property of regular pairs, namely that a subpair of a regular pair is regular. As we already illustrated before, this property does not carry over to degular pairs and the same argument therefore does not work to show Theorem~\ref{thm:main}. Moreover in Section~\ref{ssec:ComplexSimple} we discuss that the construction from~\cite{MR3516883} has a $\varepsilon$-degular partition of relatively small complexity. Thus the main work in our proof of Theorem~\ref{thm:main} is to circumvent this shortcoming using a one-sided version of the above subpair property which is still valid in the degular setting, namely Fact~\ref{fact:degoneside}. This in turn leads to introducing additional orientations on the blobs of each level and further refined adjustments of the construction.

\subsubsection*{Structure of the paper}The proof of Theorem~\ref{thm:main} is given in Section~\ref{sec:Proof}. In the rest of this section, we discuss further aspects of our main result. In Sections~\ref{ssec:May1} and~\ref{ssec:May2} we discuss two alternative and looser definitions for degular partitions and argue that a lower bound similar to that in Theorem~\ref{thm:main} holds for them as well. In Section~\ref{ssec:degpartdeggraph} we show that regular graphs have degular partitions of small complexity. We use this to show that each graph is an induced graph of a graph twice as big, and the latter has a degular partition of small complexity. These features are in stark contrast to the setting of the Regularity Lemma. In Section~\ref{ssec:ComplexSimple} we discuss the gap between complexities of degular and regular partitions.

\subsection{Two sparsity conditions for irregular pairs}\label{ssec:May1}
The condition~\ref{P:3} is sometimes called `the degree form of the Regularity Lemma'. There is also an `aggregate form', which instead requires that at most $\eps \ell^2$ pairs $(V_i,V_j)$ are $\eps$-irregular. The degree form obviously implies the aggregate one. However, there a well-known transformation of a $\frac{\eps^2}{8}$-regular partition $U_1\sqcup \ldots \sqcup U_h$ in the aggregate form into the degree form which goes as follows. Remove from the collection each cluster $U_i$ which forms an $\frac{\eps^2}{8}$-irregular pair with more than $\frac{\eps}{4}h$ other clusters. Simple counting gives that there are at most $\frac{\eps}{4}h$ such clusters. We distribute the contents of these problematic clusters evenly over the non-problematic ones. It is easy to check that the resulting partition is $\eps$-regular in the degree form. The same works for degular partitions.

\subsection{Equipartitions versus partitions}\label{ssec:May2}
There is an frequent alternative definition of regularity which does not require equipartitions. That is, the sets $V_1,\ldots,V_\ell$ in~\ref{P:2} may be of arbitrary positive cardinalities. In such a setting~\ref{P:3} would be useless. Indeed we could take $V_1,\ldots,V_{\ell-1}$ singletons and $V_\ell$ the rest. Then all pairs would be regular. Thus,~\ref{P:3} is replaced with the condition
\[
\sum_{i,j:(*)} |V_i||V_j|\ge (1-\eps) n^2\;.
\]
where $(*)$ runs over all $(i,j)\in[\ell]^2$, $i\neq j$ such that $(V_i,V_j)$ is $\eps$-regular. 

One can get from an $\frac{\eps}2$-regular partition $U_1,\ldots,U_\ell$ of clusters of arbitrary cardinalities to the equi-sized setting by breaking up each set $U_i$ into random subsets of cardinality $\frac{\eps n}{2\ell}$ and distributing the leftovers randomly. Note that this way we would only get the aggregate form of the regularity and have to apply the transformation described in Section~\ref{ssec:May1} to transform it to the degree form. The same can be done in the degular setting; we give the details in a related situation in Section~\ref{ssec:degpartdeggraph}.

\subsection{Degular partitions of regular graphs}\label{ssec:degpartdeggraph}
In this section, we show two things. Firstly, we show that each approximately regular graph has a degular partition of arbitrarily small complexity. Secondly, we show that each graph is an induced subgraph of a regular graph of twice the order of the former one, which has by the above a degular partition of arbitrarily small complexity. Both these features fail notably in the setting of regular partitions.

Suppose that $G$ is a regular graph on $n$ vertices. Suppose that $\eps>0$ and $L\in \NN$ are given, with the condition that $L=O_\eps(\log n)$. Basic concentration inequalities give that with positive probability a random partition of $V(G)$ into $L$ sets $V_1,\ldots,V_L$ of individual sizes $\lfloor\frac{n}{L}\rfloor$ and $\lceil\frac{n}{L}\rceil$ has the property that all pairs $(V_i,V_j)$, $\{i,j\}\in\binom{[L]}{2}$, are $\eps$-degular. That is, no non-trivial lower bound holds for the complexity of degular partitions. This can be extended to approximately regular graphs, that is, any graph in which the minimum and the maximum degrees differ by at most~$\frac{\eps n}{2}$. This is in contrast with the situation of regular partitions, see Section~\ref{ssec:ComplexSimple} for details.

This argument can be substantially extended as follows. Suppose that $G$ is an arbitrary graph of order $n$ on vertex set $V$. We construct a graph $H$ of order $2n$ on vertex set $V\sqcup W$. Firstly, we take $H[V]:=G$. Secondly, step-by-step we take each $v\in V$ and by introducing edges between $v$ and $W$ we make sure that $\deg_H(v)=n-1$. The edges are introduced in such a way that in each step we have $\max_{w\in W}\deg(w,V)-\min_{w\in W}\deg(w,V)\le 1$. In the last step, we construct a graph $H[W]$ in such a way that for each $w\in W$ we have $\deg_H(w)=n-1$, or equivalently, $\deg_{H[W]}(w)=n-1-\deg_H(w,V)$. By the above, this is a requirement for a construction of a regular or almost regular graph, and such constructions are well-known to exist.\footnote{As follows from the Erdős--Gallai theorem on graphic sequences.}
To summarize, we have constructed an almost regular graph $H$ on $2n$ vertices of which $G$ is an induced subgraph. By the discussion above, $H$ has a degular partition $V(H)=V_1\sqcup \ldots \sqcup V_\ell$ of arbitrarily small complexity $\ell$ (subject to a trivial condition~\eqref{eq:Eindhoven}). Again, this is in a stark contrast with the situation of regular partitions. Indeed, if the above partition were $\eps$-regular, then the restricted partition $V=(V_1\cap V)\sqcup \ldots \sqcup (V_\ell\cap V)$ would be a $2\eps$-regular partition of $G$ (albeit with~\ref{P:2} dropped; this is known not be a real shortcoming). 

\subsection{Graphs with complex regular partitions but simple degular partitions}\label{ssec:ComplexSimple}
As we show in this paper, even though degularity seems as substantial relaxation of regularity, in general, one cannot use the relaxation to improve much on the construction from Szemer\'edi's regularity lemma. However, there are graphs in which the gap between the most economic $\eps$-regular partition and the $\eps$-degular partition is huge. To see that, recall that Moshkovitz and Shapira~\cite{MR3516883} construct a weighted template graphs of an arbitrarily large order $n$ which is approximately degular\footnote{More precisely, they construct a perfectly degular weighted graph (see Equation~(1) in~\cite{MR3516883}) which is then, by the randomization procedure of Claim~\ref{clm:weightedsel}, transformed into an approximately regular graph. Here, by an `approximately regular graph', we mean that all the degrees differ by at most $\eps n/2$.} but each $\eps$-regular partition has complexity at least $\tower(\Theta(\eps^{-1/2}))$. On the other hand, by the discussion in Section~\ref{ssec:degpartdeggraph}, such graphs have degularizations of complexity $O(\eps^{-1})$.

\section{Proof of Theorem~\ref{thm:main}}\label{sec:Proof}
In this section, we prove Theorem~\ref{thm:main}. More specifically, in Section~\ref{ssec:construction}, for given $\eps$ and $n$ we construct an $n$-vertex \emph{weighted graph}, i.e. a graph where each edge has an assigned $[0,\infty)$-weight, $G(n,s,\delta)$; here we will choose $\delta\sim\eps^{1/3}$ and $s\sim \delta^{-1}$. In Section~\ref{ssec:MainProof} we transform this weighted graph into an unweighted graph and show that each $\eps$-degular partition of this graph has complexity at least $\tower(\Theta(\eps^{-1/3}))$.

There are two auxiliary parts needed. In Section~\ref{ssec:Separators} we construct `separators'. These are certain systems of subsets of a finite auxiliary set, strengthening the notion of `balanced partitions' used in previous work on lower bounds for the Regularity Lemma. Separators are a key building block whose iterative use yields the graph $G(n,s,\delta)$ of our main construction in Section~\ref{ssec:construction}. 

The second auxiliary part is Section~\ref{ssec:densityinheritance}. There, in Fact~\ref{fact:degoneside} we show that the well-known fact that a subpair of a regular pair is again regular has a one-sided counterpart in the degular setting. Fact~\ref{fact:degoneside} is used in the main proof in Section~\ref{ssec:MainProof}.

\subsection{Separators}\label{ssec:Separators}
The iterated key step in our lower-bound construction (as well as in the lower-bound constructions of~\cite{MR1445389} and~\cite{MR3516883}) comprises of the following splitting procedure. Given a subset of the vertices of the host graph, say consisting of $M$ subsets of equal-sized clusters $\mathcal{C}$ of vertices, and an integer $D$, the clusters $\mathcal{C}$ are partititioned in $D$ different ways into two equal-sized collections: the `first' collection and the `second' collection. The key nontrivial property is that of `balancedness': for each two clusters $C_{t},C_{t'}\in \mathcal{C}$, there is a substantial number of partitions (out of the $D$ many above) where $C_t$ belongs into the first collection and $C_{t'}$ belongs to the second collection, or vice versa. The precise definition from~\cite{MR3516883}, in which we identify $\mathcal{C}\doteq[M]$ goes as follows: A sequence of bipartitions $(A_i,B_i)_{i=1}^D$ of $[M]$ is \emph{$c$-balanced} if
\begin{enumerate}[label=(BP\arabic*)]
    \item\label{bal1} for each $i\in[D]$, $A_i$ is a subset of $[M]$ of cardinality $\frac{M}2$, and
    \item\label{bal3} for each $t,t'\in [M]$ distinct, $\left|\left\{i\in[D]:|\{t,t'\}\cap A_i|=1\right\}\right|\ge (\frac{1}{2}-c)D$.
\end{enumerate}
In our construction, we need to add an additional property. This leads us to the notion of separators.
\begin{definition}[separators]\label{def:sep}
Given two even positive integers $D$ and $M$, we call a system $(A_i,B_i)_{i=1}^D$ an \emph{$(M,D)$-separator} if it is an $0.2$-balanced system of bipartitions of $[M]$, and
\begin{enumerate}[label=(S\arabic*)]
    \item\label{sep2} for each $t\in [M]$, $|\{i\in[D]:t\in A_i\}|=\frac{D}2$.
\end{enumerate}
\end{definition}
The existence of a $c$-balanced system of bipartitions (for suitable parameters $D$ and $M$) was shown in~\cite{MR3516883} by a simple application of the probabilistic method. Using a somewhat more complex argument, we can prove the existence of separators.
\begin{lemma}\label{lem:sepexists}
For every even positive integer $D$ and even positive integer $M\le 2^{\lceil D/9999\rceil}$, there exists an $(M,D)$-separator.
\end{lemma}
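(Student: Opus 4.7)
The plan is a probabilistic construction in which the separator condition (S1) is hardwired by a pairing trick, leaving only the balancedness condition (BP2) to be controlled by Chernoff plus a union bound.

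Specifically, I would draw $A_1,\ldots,A_{D/2}$ independently and uniformly at random from $\binom{[M]}{M/2}$, and then set $A_{D/2+i}:=[M]\setminus A_i$ for each $i\le D/2$. Since $M$ is even, complements of $M/2$-sets are again $M/2$-sets, so (BP1) holds for both halves by construction. For (S1), note that for each $t\in[M]$ and each $i\le D/2$ exactly one of $t\in A_i$ and $t\in A_{D/2+i}$ holds, hence $|\{i\in[D]:t\in A_i\}|=D/2$ \emph{deterministically}. Hence the only randomized constraint left is (BP2).

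To verify (BP2), fix distinct $t,t'\in[M]$ and let $Y_i:=\mathbf{1}[|\{t,t'\}\cap A_i|=1]$. The pairing forces $Y_{D/2+i}=Y_i$, so $\sum_{i=1}^{D}Y_i=2\sum_{i=1}^{D/2}Y_i$, and it suffices to prove $\sum_{i=1}^{D/2}Y_i\ge 0.15D$. The variables $Y_1,\ldots,Y_{D/2}$ are i.i.d.\ Bernoulli with parameter $p=\tfrac{M}{2(M-1)}\ge\tfrac12$ (a direct count), so their mean is at least $D/4$, and a multiplicative Chernoff bound (with deviation $\delta\ge 0.4$) gives failure probability at most $\exp(-D/50)$. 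Summing over the fewer than $M^2$ pairs and using $\log_2 M\le\lceil D/9999\rceil\le D/9999+1$, the total failure probability is bounded by $\exp\bigl(2D\ln 2/9999+O(1)-D/50\bigr)$, which is strictly less than $1$ because $2\ln 2/9999\approx 1.4\cdot 10^{-4}$ sits comfortably below $1/50$. The single edge case is $M=2$ (the only value allowed for very small $D$), where (BP2) is immediate since $|A_i|=1$ forces $|\{t,t'\}\cap A_i|=1$ for every $i$.

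I do not anticipate any substantive obstacle; the essential move is the pairing $A_{D/2+i}=[M]\setminus A_i$, which converts the rigid per-column constraint (S1) into a structural identity while still leaving $D/2$ fully independent random subsets available for Chernoff on (BP2). The only care needed is the numerical check that the Chernoff exponent $1/50$ leaves ample room against the union-bound exponent $2\ln 2/9999$; the gap is so wide that any mildly competent concentration bound would suffice.
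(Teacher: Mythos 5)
Your proof is correct, and it takes a genuinely different (and cleaner) route than the paper's. The paper proceeds in two phases: first it samples $D^*=2\lceil 0.45D\rceil$ random $M/2$-subsets and shows by Chernoff plus a union bound that, with positive probability, these satisfy approximate versions of the three properties (in particular a weaker (S1*) demanding the count be in $[0.41D,0.49D]$); it then runs a deterministic greedy repair over the remaining $R=D-D^*$ rounds, at each step placing the $M/2$ elements with the largest deficit $\chi_j(t)$ and maintaining the invariant $\chi_j(t)\le R-j+1$ to force the exact equality~\ref{sep2} at the end. Your complementation pairing $A_{D/2+i}=[M]\setminus A_i$ eliminates the need for the repair phase entirely: it hardwires~\ref{sep2} exactly (and~\ref{bal1} for the second half) as a structural identity, reducing the randomized analysis to the single balancedness condition~\ref{bal3}. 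Your derivation there is sound: the pairing gives $Y_{D/2+i}=Y_i$, so it suffices to lower-bound $\sum_{i\le D/2}Y_i$ by $0.15D$; the $Y_i$ are i.i.d.\ Bernoulli with $p=\tfrac{M}{2(M-1)}\ge\tfrac12$, so multiplicative Chernoff with $\delta\ge 0.4$ gives $\exp(-D/50)$ per pair $\{t,t'\}$, which beats the union-bound factor $M^2\le 2^{2\lceil D/9999\rceil}$ whenever $D\ge 9999$ (so $M$ can exceed $2$), while for $D<9999$ one has $M=2$ and~\ref{bal3} is deterministic. What your approach buys is a shorter, one-shot construction with no bookkeeping invariant; what the paper's approach buys, arguably, is flexibility (the greedy repair would still work under perturbations that break the exact complementation symmetry, e.g.\ odd $D$ or non-equal halves), but for the stated lemma your version is strictly simpler and entirely adequate.
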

\begin{proof}
We will only define the sets $A_i$, as the sets $B_i$ are determined by $B_i:=[M]\setminus A_i$.
For $D<9999$ we have $M=2$. We define $A_1=A_2=\ldots=A_{\frac{D}2}=\{1\}$ and $A_{\frac{D}2+1}=A_{\frac{D}2+2}=\ldots=A_D=\{2\}$. Obviously, this satisfies all the required properties.

Suppose that $D\ge 9999$. The construction has two steps. We first construct sets $(A_i)_{i=1}^{D^*}$, where $D^*:=2\lceil0.45D\rceil$, in a way that the following versions of the properties from the definition of balanced systems and separators hold.
\begin{enumerate}[label=(\roman*')]
    \item[(BP1*)] For each $i\in[D^*]$, $A_i$ is a subset of $[M]$ of cardinality $\frac{M}2$.
    \item[(BP2*)] For each $t,t'\in [M]$ distinct, $\left|\left\{i\in[D^*]:|\{t,t'\}\cap A_i|=1\right\}\right|\ge 0.3D$.
    \item[(S1*)] For each $t\in [M]$, $|\{i\in[D^*]:t\in A_i\}|\in [0.41D,0.49D]$.
\end{enumerate}
The proof of this first step is by the probabilistic method. That is, we claim that if taking $A_i$ for $i=1,\ldots,D^*$ as a random subset of $[M]$ of size $\frac{M}2$, the system $(A_i)_{i=1}^{D^*}$ has the desired properties with positive probability. First, we calculate the probability of violating~(S1*) for a single $t\in [M]$. The random variable counting the number of indices $i\in [D^*]$ containing $t$ has mean $\frac{D^*}2=0.45D\pm 0.001D$. Chernoff's bound gives that the probability of this random variable deviating more than $0.039D$ from its mean is at most $2\exp(-2(0.039)^2D)$. Next, we calculate the probability of violating~(BP2*) for a single pair of distinct $t,t'\in [M]$. The random variable counting the number of indices $i\in [D^*]$ containing exactly one of $\{t,t'\}$ has mean $\frac{D^*}2\ge 0.44D$. Chernoff's bound gives that the probability that this random variable is at least $0.14D$ below its expectation is at most $\exp(-2(0.14)^2D)$. We use the union bound over the $M$ many choices of $t$ for which~(S1*) could fail, and also over the $\binom{M}{2}$ many choices of $t$ and $t'$ for which~(BP2*) could fail. Since calculations give for $D\geq 9999$ that 
$$M\cdot 2\exp(-2(0.039)^2D)+\binom{M}{2}\cdot\exp(-2(0.14)^2D)<1\;,$$
we indeed conclude that the system $(A_i)_{i=1}^{D^*}$ has the desired properties~(BP1*), (BP2*) and~(S1*) with positive probability.

We now proceed with the second step of the construction, which is the addition of the sets $(A_i)_{i=D^*+1}^D$. Let $R:=D-D^*$. Now, sequentially in steps $j=1,\ldots,R$, we add a set $A_{D^*+j}$. Before each step $j$, we require the following invariant:
\begin{equation*}
    \mbox{For $t\in [M]$ define $\chi_j(t):=\frac{D}2-|\{i\in[D^*+j-1]:t\in A_i\}|$. We require $\chi_j(t)\le R-j+1$.}
\end{equation*}
That is, $\chi_j(t)$ counts how many times we still need to include the element $t$ in $A_{D^*+j},A_{D^*+j+1},\ldots,A_D$ in order to satisfy~\ref{sep2}. Note that we always have
\begin{equation}\label{eq:MILUJUSUMY}
\sum_{t=1}^M\chi_j(t)=\frac{MD}{2}-(D^*+j-1)\frac{M}{2}=(R-j+1)\frac{M}{2}\;.
\end{equation}
The condition $\chi_j(t)\le R-j+1$ makes sure that we do not have to include $t$ into more sets than the number of leftover steps. Also, note that we may not include an element $t$ into $A_{D^*+j}$ if we have $\chi_j(t)=0$.

First note that this invariant is satisfied for $j=1$. Indeed, by~(S1*), $$\chi_1(t)\le \frac{D}2-0.41D\le 0.1D- 0.001D\le D-D^*=R-1+1\;.$$

Now, in step $j\ge 1$, we define $A_{D^*+j}$ as consisting of the $\frac{M}2$ elements $t\in[M]$ with the highest value of $\chi_j(t)$ (breaking ties arbitrarily). We first verify that none of these selected elements has $\chi_j(t)=0$. As none of the summands  in~\eqref{eq:MILUJUSUMY} is bigger than $R-j+1$ by the invariant, at least $\frac{M}{2}$ many are nonzero. Next, we verify the invariant for $\chi_{j+1}$. Suppose for a contradiction that there exists $t$ such that $\chi_{j+1}(t)\ge R-j+1$. We see that $t$ was not selected for $A_{D^*+j}$ (indeed, otherwise, using the invariant for $\chi_j$, $\chi_{j+1}(t)=\chi_{j}(t)-1\le (R-j+1)-1$). From the general monotonicity $\chi_j\ge \chi_{j+1}$, we have $\chi_{j}(t)\ge R-j+1$. But since $t$ was not selected for $A_{D^*+j}$, we see that there were at least $\frac{M}2$ additional elements with $\chi_{j}$ at least as high. In particular,
$$\sum_{t=1}^M \chi_{j}(t)\ge \left(\frac{M}2+1\right)(R-j+1)\;,$$
a contradiction to~\eqref{eq:MILUJUSUMY}.
\end{proof}

The next lemma is Lemma~2.3 from~\cite{MR3516883}, with changed constants. We omit the proof as it follows \emph{mutatis mutandis}.
\begin{lemma}\label{lem:numbbip}
Let $D,M\in\NN$, and $\zeta\in(0,\frac18)$ be arbitrary. If $(A_i,B_i)_{i=1}^D$ is a sequence of bipartitions of $[M]$ that is $1/5$-balanced, then for every $\lambda=(\lambda_1,\cdots,\lambda_M)$ with $\lambda_t\geq 0$, $\lVert\lambda\rVert_1=1$ and $\lVert\lambda\rVert_\infty\leq 1-8\zeta$, at least $D/15$ of the bipartitions $(A_i,B_i)$ satisfy $\min(\sum_{t\in A_i}\lambda_t,\sum_{t\in B_i}\lambda_t)\geq\zeta$. In particular this holds for an $(M,D)$-separator.
\end{lemma}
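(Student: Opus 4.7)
Plan: I would prove the contrapositive. Call a bipartition $i$ \emph{good} if $\min(a_i,b_i)\ge \zeta$, where $a_i:=\sum_{t\in A_i}\lambda_t$ and $b_i:=\sum_{t\in B_i}\lambda_t$, let $g$ denote the number of good bipartitions, and suppose for contradiction that $g<D/15$. For each bad bipartition $i$ exactly one of $A_i,B_i$ carries $\lambda$-mass strictly above $1-\zeta$; call it the \emph{heavy side} $Q_i$, set $P_i:=[M]\setminus Q_i$, and define
\[
q(t)\;:=\;|\{i\text{ bad}:t\in Q_i\}|\qquad (t\in[M]).
\]

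Two numerical consequences drive the proof. First, summing $\sum_{t\in Q_i}\lambda_t>1-\zeta$ over bad $i$ and swapping the order of summation yields the \emph{mass bound} $\sum_{t}\lambda_t\, q(t)>(D-g)(1-\zeta)$. Second, $1/5$-balancedness says that any two distinct $t,t'\in[M]$ are separated in at least $3D/10$ bipartitions, hence in at least $3D/10-g$ bad ones. Writing $n_{QQ},n_{QP},n_{PQ},n_{PP}$ for the counts of bad $i$ with $(t,t')$ lying in $Q_i\times Q_i$, $Q_i\times P_i$, $P_i\times Q_i$, $P_i\times P_i$ respectively, the identity $q(t)+q(t')=(D-g)+n_{QQ}-n_{PP}$ together with $n_{QQ}+n_{PP}\le (D-g)-(3D/10-g)=7D/10$ produces the \emph{pairwise bound}
\[
q(t)+q(t')\;\le\;\tfrac{17D}{10}-g.
\]

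Next I would let $t^\star$ attain $y:=\max_t q(t)$ and set $z:=\max_{t\ne t^\star}q(t)$. Since $q(t)\le z$ for $t\ne t^\star$ we get $\sum_t \lambda_t q(t)\le\lambda_{t^\star}y+(1-\lambda_{t^\star})z$; this expression is nondecreasing in $\lambda_{t^\star}$ because $y\ge z$, so the hypothesis $\|\lambda\|_\infty\le 1-8\zeta$ yields the upper bound $\sum_t\lambda_t q(t)\le (1-8\zeta)y+8\zeta z$. The proof is then completed by maximizing the linear objective $(1-8\zeta)y+8\zeta z$ over the polytope $\{0\le z\le y\le D-g,\; y+z\le 17D/10-g\}$ and checking that, for $\zeta\le 1/8$ and $g<D/15$, the optimum is at most $(D-g)(1-\zeta)$, which contradicts the mass bound. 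The ``in particular'' clause is then immediate because $(M,D)$-separators are $0.2$-balanced by Definition~\ref{def:sep}.

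The main obstacle I anticipate is the bookkeeping in the LP step: with $C:=17D/10-g$, the maximizer of the linear objective sits at the interior vertex $(y,z)=(C/2,C/2)$ when $\zeta\ge 1/16$ but at the boundary vertex $(y,z)=(D-g,\,C-(D-g))$ when $\zeta\le 1/16$, and in each of these regimes one has to verify that the threshold $(D-g)(1-\zeta)$ is reached, with the inequality becoming tight exactly at $\zeta=1/8$, $g=D/15$.
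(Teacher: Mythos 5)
The paper does not actually prove Lemma~\ref{lem:numbbip}: it simply declares it to be Lemma~2.3 of Moshkovitz--Shapira with changed constants and writes ``we omit the proof as it follows \emph{mutatis mutandis}.'' So there is no in-paper argument to compare against, and your proof is a genuine contribution of a self-contained argument.

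Your proof is correct. I verified the key steps. The mass bound follows by summing $\sum_{t\in Q_i}\lambda_t>1-\zeta$ over the $D-g$ bad indices and swapping sums. For the pairwise bound: with $n_{QQ},n_{QP},n_{PQ},n_{PP}$ as defined, $q(t)=n_{QQ}+n_{QP}$ and $q(t')=n_{QQ}+n_{PQ}$, so indeed $q(t)+q(t')=(D-g)+n_{QQ}-n_{PP}$; since separated pairs among bad bipartitions number at least $3D/10-g$ (at most $g$ good ones can absorb the $1/5$-balancedness guarantee of $3D/10$), we get $n_{QQ}+n_{PP}\le 7D/10$, and hence $q(t)+q(t')\le 17D/10-g$. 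The reduction $\sum_t\lambda_t q(t)\le(1-8\zeta)y+8\zeta z$ is correct because $z+\lambda_{t^\star}(y-z)$ is monotone in $\lambda_{t^\star}$ and $\lambda_{t^\star}\le\|\lambda\|_\infty\le 1-8\zeta$. Finally, the LP check: in the regime $\zeta\le 1/16$ the vertex $(D-g,\,7D/10)$ is optimal and the threshold $(D-g)(1-\zeta)$ holds iff $g\le D/5$, which is slack; in the regime $\zeta\ge 1/16$ the vertex $(C/2,C/2)$ is optimal and the threshold $C/2\le(D-g)(1-\zeta)$ rearranges to $\zeta\le\frac{3D/20-g/2}{D-g}$, whose right-hand side is decreasing in $g$ and equals exactly $1/8$ at $g=D/15$. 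So with $\zeta<1/8$ and $g<D/15$ the LP maximum is strictly below $(D-g)(1-\zeta)$, contradicting the (strict) mass bound. The ``in particular'' clause is immediate since a $(M,D)$-separator is $0.2=1/5$-balanced by definition. This is a clean contrapositive plus double-counting-and-LP argument; since the paper only refers the reader to the reference, your write-up is a useful replacement for that citation.
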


\subsection{Construction}\label{ssec:construction}
Given integers $n,s$ and $\delta>0$ we construct a weighted graph $G=G(n,s,\delta)$ on $[n]$. In fact, we construct $s$ weighted graphs $G_1,\ldots,G_s$ and then obtain $G$ by putting these graphs together. Later we will, for a given $\varepsilon>0$, choose $n$ large in terms of $\varepsilon$, $\delta$ and $s$. Since we work with weighted graphs we extend the definition of the degree of a vertex $\deg(v)$ and the edge density between two sets of vertices $d(A,B)$ straightforwardly by summing over the corresponding edge weights. 


Now, we define recursively $m_0=2$ and for $r\in[s]$ we set $m_r=m_{r-1}M_r$, where $M_r=2^{\lceil m_{r-1}/(4\cdot 9999)\rceil}$.
We always assume that $n$ has the right divisibility for what follows and consider a sequence of equipartitions $(\mathcal{X}_r)_{r\in[s]}$ of $[n]$ in which $\mathcal{X}_0=\{\{1,\cdots,n/2\},\{n/2+1,\cdots,n\}\}$ and iteratively $\mathcal{X}_r$ is a refinement of $\mathcal{X}_{r-1}$ by partitioning each partite set of $\mathcal{X}_{r-1}$ into $M_r$ many consecutive intervals. Note that $m_r=|\mathcal{X}_{r}|$.

For every $r\in[s-1]$ we construct a weighted graph $G_{r+1}$. A crucial homogeneity property in our construction will be that for every two distinct cells $C_1,C_2\in \mathcal{X}_{r+1}$,
\begin{equation}\label{eq:homog}
\text{the edge weights of the bipartite graph $G_{r+1}[C_1,C_2]$ are either all equal to $\delta$ or $0$.}
\end{equation}
Write $\mathcal{X}_{r-1}=\{X_1,\cdots,X_{m_{r-1}}\}$ and $\mathcal{X}_{r}=\{X_{i,t}\}_{i\in [m_{r-1}],t\in [M_r]}$ where $X_i$ is partitioned into $\{X_{i,1},\cdots,X_{i,M_r}\}$. For every $1\leq i< j\leq m_{r-1}$ choose a regular bipartite tournament on $\{X_{i,t}\}_{t=1}^{M_r}\cup \{X_{j,t}\}_{t=1}^{M_r}$. Such a bipartite tournament exists; for example we can take the directed edges $$\{(X_{i,t},X_{j,t'})\}_{t\in[M_r],t'\in J^+_t}\cup \{(X_{j,t},X_{i,t'})\}_{t\in[M_r],t'\in J^{++}_t}\;,$$ where $J^+_t\subset [M_r]$ is defined as the $M_r/2$-many elements (recall that $M_r$ is even) that include and follow $t$, viewed cyclically in the set $[M_r]$, $$J^+_t=\{t,t+1,\ldots,t+M_r/2-1\} \mod M_r\;,$$ and similarly, $J^{++}_t$ are the $M_r/2$ many numbers that follow $t$, $$J^{++}_t=\{t+1,t+2,\ldots,t+M_r/2\} \mod M_r\;.$$

\begin{figure}
\includegraphics[scale=0.8]{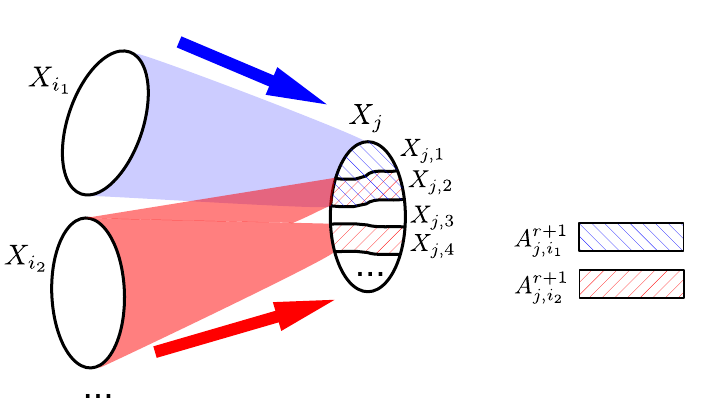}
\caption{A part of the construction of the graph $G_{r+1}$. In this example, $X_{i_1}$ and $X_{i_2}$ are two inneighbours of $X_j$ in $T_r$, and $A'_{i_1}=\{1,2,\cancel{3},\cancel{4},\ldots\}$ and $A'_{i_2}=\{\cancel{1},2,\cancel{3},4,\ldots\}$. The solid blue and red areas indicate positive weights inserted in~\eqref{eq:defGr}.}
\label{fig:Constr}
\end{figure}
The above bipartite tournaments have disjoint edge sets. Their union is denoted by $T_{r}$ and is itself a digraph on $\mathcal{X}_{r}$. Note that the indegree and the outdegree of every vertex in $T_{r}$, i.e. a set $X_{i,t}$, is equal to $D_{r}=(m_{r-1}-1)\cdot M_r/2=(m_r-M_r)/2$. Observe that 
\begin{equation}\label{eq:Dm}
D_r\ge m_r/4    
\end{equation}
and hence $M_{r+1}\le 2^{\lceil D_{r}/ 9999\rceil}$. Figure~\ref{fig:Constr} is aimed to help with the steps that follow. Lemma~\ref{lem:sepexists} tells us there exists an  $(M_{r+1},D_{r})$-separator $(A_j',B_j')_{j=1}^{D_{r}}$. We define a weighted graph $G_{r+1}$ on the vertex set $[n]$ the following way. Fix $j\in m_{r}$. Let $\{X_{i_1},\cdots,X_{i_{D_{r}}}\}$ be the inneighbourhood of $X_j$ in $T_{r}$. Recall that $X_j=\bigcup_{t\in[M_{r+1}]}X_{j,t}$, where $X_{j,t}\in\mathcal{X}_{r+1}$. Then for every $\ell\in\{1,\cdots,D_{r}\}$ set $A^{r+1}_{j,i_{\ell}}=\bigcup_{t\in A'_{\ell}}X_{j,t}$. Finally, set
\begin{equation}\label{eq:defGr}
G_{r+1}(x,y)=\begin{cases}\delta,\textrm{ if } x\in X_i,y\in X_j \textrm{ for some  $(X_i,X_j)\in T_{r}$ and $y\in A^{r+1}_{j,i}$,}\\
0,\textrm{ otherwise.}\end{cases}
\end{equation}
It is clear that~\eqref{eq:homog} holds.

Additionally, for $\mathcal{X}_0=\{X_1,X_2\}$, we define $G_1$ to be the graph with
\begin{equation}\label{eq:defG0}
G_{1}(x,y)=
\begin{cases}0.1,\textrm{ if } x,y\in X_1\;,\\ 
0.9,\textrm{ if } x,y\in X_2\;.
\end{cases}
\end{equation} 

\begin{construction}\label{constr:G}
Given $s\in\mathbb N$, $\delta>0$ and $n\geq m_s$ with $m_s\mid n$, we set $G=G(n,s,\delta)=\sum_{r=1}^s G_r$.
\end{construction}

Given a vertex $x$ and a set of vertices $X$ we write $d(x,X)$ shorthand for $d(\{x\},X)$.

\begin{observation}\label{obs:deg}
For $\delta>0$, $s\in\mathbb N$ and $n\geq m_s$ with $m_s\mid n$, let $G$ be given by Construction~\ref{constr:G}. Let $r\in[s]$ and $X \in\mathcal{X}_{r-1}$. If $x\in [n]\setminus X$, then
$$d_{G_{r+1}}(x,X)+\cdots+ d_{G_s}(x,X)=\tfrac{1}{2}\delta(s-r)\;.$$
\end{observation}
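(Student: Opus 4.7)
The plan is to show the stronger pointwise identity $d_{G_{r'}}(x,X)=\delta/2$ for each $r'\in\{r+1,\ldots,s\}$ and then sum over $r'$. So fix $r'$ and let $X_0\in\mathcal{X}_{r'-1}$ be the cluster containing $x$, with $\mathcal{X}_{r-1}$-parent $X^*$. Since $\mathcal{X}_{r'-1}$ refines $\mathcal{X}_{r-1}$ and $x\notin X$, we have $X^*\ne X$, so $X_0$ and every $\mathcal{X}_{r'-2}$-cluster inside $X$ lie in different $\mathcal{X}_{r'-2}$-parents, and are thus joined by a regular bipartite tournament inside $T_{r'-1}$.

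Write $N:=m_{r'-1}/m_{r-1}$ for the number of $\mathcal{X}_{r'-1}$-clusters in $X$. I split $e_{G_{r'}}(x,X)$ into the contributions from edges where the $T_{r'-1}$-arc points out of $X_0$ versus into $X_0$. Regularity of the tournaments gives exactly $N/2$ out-neighbours and $N/2$ in-neighbours of $X_0$ inside $X$. Each out-neighbour $X_j\subseteq X$ contributes weight $\delta\cdot|A^{r'}_{j,X_0}|=\delta|X_j|/2$ by (BP1) of the separator, giving an outgoing contribution of $(N/2)\cdot\delta|X_j|/2=\delta|X|/4$. Each in-neighbour $X_i\subseteq X$ contributes $\delta|X_i|=\delta|X|/N$ exactly when $x\in A^{r'}_{X_0,X_i}$, which, writing $X_i=X_{i_\ell}$ in the enumeration of $X_0$'s in-neighbours, is the condition $\tau(x)\in A'_\ell$; here $\tau(x)\in[M_{r'}]$ is determined by the $\mathcal{X}_{r'}$-sub-cell of $X_0$ containing $x$.

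The identity $d_{G_{r'}}(x,X)=\delta/2$ then reduces to the \emph{balance identity} that exactly $N/4$ of the in-neighbours of $X_0$ inside $X$ are active for $x$, since this yields an incoming contribution $(N/4)\cdot\delta|X|/N=\delta|X|/4$ which, added to the outgoing one, gives $e_{G_{r'}}(x,X)=\delta|X|/2$. I would secure this balance by choosing, in Construction~\ref{constr:G}, the enumeration of $X_0$'s in-neighbours so that within every $\mathcal{X}_{r'-2}$-block of $M_{r'-1}/2$ in-neighbours (coming from a single ``sibling'' $\mathcal{X}_{r'-2}$-cluster), the assigned separator indices split evenly for every $\tau\in[M_{r'}]$ between those with $\tau\in A'_\ell$ and those without; summing this per-block balance over the $m_{r'-2}/m_{r-1}$ such blocks inside $X$ then produces $N/4$. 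The main obstacle I anticipate is exactly this balance identity: property (S1) supplies only the global count $D_{r'-1}/2$ of active indices over $[D_{r'-1}]$, so the pointwise observation demands either a careful choice of enumeration (which is possible because (S1) is simultaneous in all coordinates $\tau$) or a stronger local-balance invariant of the separator beyond what Lemma~\ref{lem:sepexists} states.
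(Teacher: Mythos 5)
Your decomposition of $d_{G_{r'}}(x,X)$ into out- and in-contributions, and the reduction of the target identity $d_{G_{r'}}(x,X)=\delta/2$ to the balance claim that exactly $N/4$ of the in-neighbours of $X_0$ inside $X$ are active for $x$, is precisely how the paper argues: it fixes $r'$, writes $X=\bigcup_{X'\in\mathcal{P}}X'$ with $\mathcal{P}\subseteq\mathcal{X}_{r'-1}$, notes $|\mathcal{P}\cap\NIn_{T_{r'-1}}(X_i)|=|\mathcal{P}|/2$ by regularity of the bipartite tournaments, claims that exactly $|\mathcal{P}|/4$ of these satisfy $x\in A^{r'}_{i,j}$, and concludes $d_{G_{r'}}(x,X)=\tfrac{1}{2}\cdot\tfrac{\delta}{2}+\tfrac{1}{4}\cdot 0+\tfrac{1}{4}\cdot\delta=\tfrac{\delta}{2}$. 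So your route and the paper's coincide.

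You are also right to flag the balance claim as the nontrivial step. Property~\ref{sep2} concerns the \emph{full} index set $[D_{r'-1}]$: it gives the exact global count $D_{r'-1}/2$ of indices $\ell$ with $\tau(x)\in A'_\ell$, but by itself it does not control how many of these fall into the particular $|\mathcal{P}|/2$-element subset of indices corresponding to $\mathcal{P}\cap\NIn_{T_{r'-1}}(X_0)$, since that subset depends on the enumeration of $\NIn_{T_{r'-1}}(X_0)$ chosen when the separator is applied in Construction~\ref{constr:G}. At this very point the paper writes ``for exactly $\tfrac{|\mathcal{P}|/2}{D_{r'-1}}\cdot\tfrac{D_{r'-1}}{2}=|\mathcal{P}|/4$ many $X_j\in\mathcal{P}\cap\NIn_{T_{r'-1}}(X_i)$ we have $x\in A^{r'}_{i,j}$,'' which scales the global count by the fraction of in-neighbours landing in $\mathcal{P}$; that proportionality is not a consequence of~\ref{sep2} alone and rests on an implicit choice of enumeration. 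Your proposed repair --- order the in-neighbours so that every sibling block of $M_{r'-1}/2$ indices carries, for each $\tau\in[M_{r'}]$, exactly $M_{r'-1}/4$ active indices, then sum over the $m_{r'-2}/m_{r-1}$ such blocks inside $X$ --- is exactly the local form of balance that would make the computation exact at all levels $r\le r'-1$ simultaneously, because $\mathcal{P}\cap\NIn_{T_{r'-1}}(X_0)$ is always a union of sibling blocks. As you anticipate, however, this block-uniform property is strictly stronger than what Definition~\ref{def:sep} states and is not delivered by Lemma~\ref{lem:sepexists}. In short: your sketch mirrors the paper's proof, and the obstacle you isolate is a genuine gap that the published proof leaves implicit rather than a defect of your own approach.
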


\begin{proof}
Fix $r'\in\{r+1, \ldots,s\}$ and fix the unique $X_i\in\mathcal{X}_{r'-1}$ with $x\in X_i$. For $X_j\in\mathcal{X}_{r'-1}$ we have
\[
d_{G_{r'}}(x,X_j)=
\begin{cases}
0,&\textrm{ if $X_i=X_j$,}\\
\delta/2,&\textrm{ if $(X_i,X_j)\in T_{r'-1}$,}\\
\delta,&\textrm{ if $(X_j,X_i)\in T_{r'-1}$ and $x\in A_{i,j}^{r'}$,}\\
0,&\textrm{ if $(X_j,X_i)\in T_{r'-1}$ and $x\notin A_{i,j}^{r'}$\;.}
\end{cases}
\]
Since $\mathcal{X}_{r'-1}$ is a refinement of $\mathcal{X}_{r-1}$ there exists $\mathcal{P}\subseteq \mathcal{X}_{r'-1}$ such that $X=\bigcup_{X'\in\mathcal{P}}X'$. Furthermore by the definition of $T_{r'-1}$ we have $|\mathcal{P}\cap \NIn_{T_{r'-1}}(X_i)|=|\mathcal{P}\cap \NOut_{T_{r'-1}}(X_i)|=|\mathcal{P}|/2$. Furthermore, by the choice of separators in the construction and Definition~\ref{def:sep}~\ref{sep2} for exactly $\tfrac{|\mathcal{P}|/2}{D_{r'-1}}\cdot\tfrac{D_{r'-1}}{2}=|\mathcal{P}|/4$ many $X_j\in\mathcal{P}\cap \NIn_{T_{r'-1}}(X_i)$ we have $x\in A_{i,j}^{r'}$ and for the same number of $X_j\in\mathcal{P}\cap \NIn_{T_{r'-1}}(X_i)$ we have $x\notin A_{i,j}^{r'}$. Therefore
$$d_{G_{r'}}(x,X)=\tfrac{1}{2}\cdot\delta/2+\tfrac{1}{4}\cdot 0+\tfrac{1}{4}\cdot \delta=\delta/2$$
and the claim follows by summing over $r'=r+1,\ldots,s$.
\end{proof}

\subsection{Density inheritance in degular pairs}\label{ssec:densityinheritance}
A crucial property of an $\varepsilon$-degular pair $(A,B)$ is that the edge-density between every subset $X\subseteq A$ and the whole set of $B$ is almost the same as the one between $A$ and $B$. We summarize this straightforward observation in the following statement.

\begin{fact}\label{fact:degoneside}
Let $(A,B)$ be an $\varepsilon$-degular pair in a possibly weighted graph. For every $X\subseteq A$ we have
\[|d(X,B)-d(A,B)|\leq \left(1+\frac{|A|}{|X|}\right)\varepsilon\;.\]
\end{fact}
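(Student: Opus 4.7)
The plan is to apply the definition of $\varepsilon$-degularity directly and absorb the contribution of the exceptional vertices into the extra factor $|A|/|X|$ in the stated bound. Specifically, I would first invoke the definition to obtain an exceptional set $A^-\subseteq A$ with $|A^-|\le\varepsilon|A|$ such that every $a\in A\setminus A^-$ satisfies $\big|\deg(a,B)-e(A,B)/|A|\big|\le\varepsilon|B|$. Decompose $X=X'\sqcup X''$ with $X':=X\setminus A^-$ and $X'':=X\cap A^-$; by monotonicity $|X''|\le|A^-|\le\varepsilon|A|$.

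The main computation is to write
\begin{equation*}
e(X,B)-|X|\cdot\frac{e(A,B)}{|A|}=\sum_{a\in X'}\!\Big(\deg(a,B)-\tfrac{e(A,B)}{|A|}\Big)+\sum_{a\in X''}\!\Big(\deg(a,B)-\tfrac{e(A,B)}{|A|}\Big),
\end{equation*}
and bound the two sums separately. The first sum is controlled by the degular bound and is at most $|X'|\cdot\varepsilon|B|$ in absolute value. For the second sum I would use the trivial estimate $\deg(a,B)\in[0,|B|]$, so each summand has absolute value at most $|B|$ and the whole sum at most $|X''|\cdot|B|\le\varepsilon|A|\cdot|B|$. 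Dividing through by $|X||B|$ then gives
\begin{equation*}
\bigl|d(X,B)-d(A,B)\bigr|\le\frac{|X'|\,\varepsilon|B|+\varepsilon|A|\,|B|}{|X||B|}\le\varepsilon+\frac{|A|}{|X|}\varepsilon=\Big(1+\frac{|A|}{|X|}\Big)\varepsilon,
\end{equation*}
which is the desired inequality.

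There is no real obstacle in this argument; it is essentially a direct consequence of the definition. The only subtlety worth flagging is that the $|A|/|X|$ factor is exactly the cost paid for the fact that the $\le\varepsilon|A|$ exceptional vertices of $X''$ can each deviate from the mean degree by as much as $|B|$, and after normalising by $|X||B|$ this produces the term $\varepsilon|A|/|X|$. For the weighted case, the trivial bound $\deg(a,B)\le|B|$ tacitly uses that the edge weights are bounded by $1$, which is the natural normalisation under which $e(A,B)/|A|\le|B|$; otherwise the argument goes through unchanged with the maximum edge weight absorbed as a multiplicative constant.
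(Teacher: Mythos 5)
Your proof is correct and takes essentially the same approach as the paper: both split $X$ into $X\setminus A^-$ and $X\cap A^-$, apply the degular bound $|\deg(a,B)-e(A,B)/|A||\le\varepsilon|B|$ on the former, the trivial bound $\deg(a,B)\le|B|$ on the latter, and then normalize by $|X||B|$. Your remark about the weighted case tacitly assuming edge weights bounded by $1$ is also apt, since the paper's own proof uses $d(x,B)=d(A,B)\pm 1$ for the exceptional vertices.
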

\begin{proof}
Let $A^-\subset A$ be a set of size at most $\varepsilon |A|$ as in~\eqref{eq:defdegular}. We have
\begin{align*}
d(X,B)&=\frac{1}{|X|}\left(\sum_{x\in X\setminus A^-}d(x,B)+\sum_{x\in X\cap A^-}d(x,B)\right)\\
&=\frac{1}{|X|}\left(\sum_{x\in X\setminus A^-}(d(A,B)\pm\varepsilon)+\sum_{x\in X\cap A^-}(d(A,B)\pm1)\right)\\
&=d(A,B)\pm\left(\varepsilon+\varepsilon\cdot\frac{|A|}{|X|}\right)\;.
\end{align*}
\end{proof}

\subsection{Main proof}\label{ssec:MainProof}
Below, we work with the natural extension of degularity and degular partitions to the setting of weighted graphs. We show the following lemma which is an adaptation of~Lemma 2.4 from~\cite{MR3516883} to our altered construction. For two vertex sets $A,B$ we write $A\subseteq_{\beta}B$ if $|A\cap B|\geq(1-\beta)|A|$. Furthermore, we say that a partition $\mathcal{Z}$ of $[n]$ is a $\beta$-refinement of a partition $\mathcal{P}$ of $[n]$ if for every $Z\in\mathcal{Z}$ there exists a $P\in\mathcal{P}$ such that $Z\subseteq_{\beta}P$.

\begin{lemma}\label{lem:betaref}
Suppose that $\varepsilon, \beta, \mu, \delta>0$ are such that $32\varepsilon/\mu,4000\varepsilon,1600\beta<\delta<1/2$, and $1\leq r\leq s$ and $s\delta\leq 0.1$. For $n$ large enough let $G(n,s,\delta)$ be given by Construction~\ref{constr:G}. If $\mathcal{Z}$ is an $\eps$-degular partition of $G$ that $\beta$-refines $\mathcal{X}_{r-1}$, then it $(\beta+8\mu)$-refines $\mathcal{X}_{r}$.
\end{lemma}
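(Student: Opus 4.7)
My plan is to argue by contradiction, essentially following~\cite[Lemma~2.4]{MR3516883} but replacing Szemerédi regularity with degularity via Fact~\ref{fact:degoneside}. Assume that $\mathcal{Z}$ is $\varepsilon$-degular, $\beta$-refines $\mathcal{X}_{r-1}$, yet fails to $(\beta+8\mu)$-refine $\mathcal{X}_r$. Then some cluster $Z\in\mathcal{Z}$ satisfies $|Z\cap X_j|\ge(1-\beta)|Z|$ for some $X_j\in\mathcal{X}_{r-1}$, while $|Z\cap X_{j,t}|<(1-\beta-8\mu)|Z|$ for every level-$r$ sub-cell $X_{j,t}$ of $X_j$. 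Writing $\lambda_t:=|Z\cap X_{j,t}|/|Z\cap X_j|$ one has $\|\lambda\|_\infty\le 1-8\mu$, so I will apply Lemma~\ref{lem:numbbip} with $\zeta=\mu$ to the $(M_r,D_{r-1})$-separator used in building $G_r$ to produce a set $L\subseteq[D_{r-1}]$ of size at least $D_{r-1}/15$ such that both $\sum_{t\in A'_\ell}\lambda_t\ge\mu$ and $\sum_{t\in B'_\ell}\lambda_t\ge\mu$ hold for each $\ell\in L$.

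For each $\ell\in L$ let $X_i=X_{i_\ell}$ denote the corresponding inneighbour of $X_j$ in $T_{r-1}$ and put $X_+:=Z\cap A^r_{j,i}$, $X_-:=Z\cap(X_j\setminus A^r_{j,i})$. Both sets have size at least $\mu(1-\beta)|Z|\ge\mu|Z|/2$. Using the $\varepsilon$-degular partition property together with the $\beta$-refinement, I will pick a cluster $Z'\in\mathcal{Z}$ with $Z'\subseteq_\beta X_i$ forming an $\varepsilon$-degular pair with $Z$ and, moreover, satisfying a pseudorandomness condition on $Z'\cap X_i$ that I spell out in the last paragraph. Fact~\ref{fact:degoneside} applied to $(Z,Z')$ with the subsets $X_\pm\subseteq Z$ will then yield
\[
\bigl|d(X_+,Z')-d(X_-,Z')\bigr|\le 2\Bigl(1+\tfrac{2}{\mu}\Bigr)\varepsilon\le\frac{8\varepsilon}{\mu},
\]
so a contradiction will follow once I show, on the construction side, that the same difference is at least $\delta/2$; the hypothesis $32\varepsilon/\mu<\delta$ forces $8\varepsilon/\mu<\delta/4$.

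I will decompose $d(X_+,Z')-d(X_-,Z')=\sum_{r'=1}^s\bigl(d^{(r')}(X_+,Z')-d^{(r')}(X_-,Z')\bigr)$. For $r'<r$ the weight $G_{r'}$ is constant on each pair of $\mathcal{X}_{r'-1}$-cells and hence constant on $X_j\times X_i$, so this contribution vanishes up to an $O(\beta)$-error arising from $Z'\setminus X_i$. At $r'=r$, every $G_r$-edge from $X_i$ into $A^r_{j,i}\supseteq X_+$ has weight $\delta$ while every $G_r$-edge from $X_i$ into $X_j\setminus A^r_{j,i}\supseteq X_-$ has weight $0$; an easy computation accounting for the $\beta$-fringe of $Z'$ outside $X_i$ and for the bounded contribution of other inneighbours of $X_j$ yields $d^{(r)}(X_+,Z')-d^{(r)}(X_-,Z')\ge\delta(1-3\beta)$. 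At $r'>r$ the crucial identity provided by Observation~\ref{obs:deg} is that $\deg^{(\ge r+1)}(a,X_i)=\tfrac{1}{2}\delta(s-r)|X_i|$ is \emph{constant} across $a\in X_j$; summing over $a\in X_\pm$ immediately gives the equality $d^{(\ge r+1)}(X_+,X_i)=d^{(\ge r+1)}(X_-,X_i)$.

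The main obstacle, and the bulk of the technical work, will be to bound the remaining $>r$-level discrepancy which arises because $Z'\cap X_i$ is a strict subset of $X_i$. The plan is an averaging/pigeonhole argument: the weighted (by $|Z''\cap X_i|$) average of $d^{(\ge r+1)}(X_+,Z''\cap X_i)-d^{(\ge r+1)}(X_-,Z''\cap X_i)$ over $Z''\in\mathcal{Z}$ with $Z''\subseteq_\beta X_i$ equals $d^{(\ge r+1)}(X_+,V)-d^{(\ge r+1)}(X_-,V)$, where $V:=\bigsqcup(Z''\cap X_i)$ covers $X_i$ up to an $O(\beta)$-fraction. Combined with the identity $d^{(\ge r+1)}(X_+,X_i)=d^{(\ge r+1)}(X_-,X_i)$, this weighted average is of order $\beta\cdot s\delta=O(\beta)$. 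Since at most $\varepsilon|\mathcal{Z}|$ clusters fail to be $\varepsilon$-degular with $Z$, the hypotheses $4000\varepsilon<\delta$ and $s\delta\le 0.1$ will ensure that enough candidates remain for the pigeonhole to deliver a valid $Z'$ with $d^{(\ge r+1)}(X_+,Z')-d^{(\ge r+1)}(X_-,Z')\ge -O(\beta)$. Combining the three level-ranges then gives $d(X_+,Z')-d(X_-,Z')\ge\delta(1-3\beta)-O(\beta)\ge\delta/2$ by $1600\beta<\delta$, contradicting the degularity bound $8\varepsilon/\mu<\delta/4$ from the second paragraph.
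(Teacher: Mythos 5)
Your plan is, at its core, a mirror image of the paper's own argument: both rely on Lemma~\ref{lem:numbbip} to find many balanced in-neighbours $X_i$ of $X_{j^*}$, on the exact identity $d_G(Z_A,X_i)-d_G(Z_B,X_i)=\delta$ obtained from the construction together with Observation~\ref{obs:deg}, and on Fact~\ref{fact:degoneside} for degular pairs. The paper concludes by counting: for \emph{every} contributing $X_i$ the averaging argument shows a $\delta/4$-fraction of $\mathcal{Z}_i$ forms non-degular pairs with $Z_0$, and summing over the $\ge m_{r-1}/120$ contributing $X_i$'s yields more than $\eps\ell$ non-degular pairs, contradicting the partition property. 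You instead want to exhibit a \emph{single} degular pair $(Z_0,Z')$ that violates the one-sided inheritance bound. Logically these are dual; the trouble is that your pigeonhole as stated does not close.

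There are two concrete gaps. First, the threshold $d^{(\ge r+1)}(X_+,Z')-d^{(\ge r+1)}(X_-,Z')\ge -O(\beta)$ is too demanding. The averaging argument you outline only gives, by a Markov-type bound, a $\Theta(\beta)$-fraction of clusters in $\mathcal{Z}_i$ with discrepancy $\ge -O(\beta)$, since the average is $O(\beta)$ but the values range over $[-s\delta,s\delta]$. The non-degular fraction in $\mathcal{Z}_i$ can be as large as $\Theta(\eps)$, and the lemma's hypotheses place no upper bound on $\eps$ in terms of $\beta$ (only both in terms of $\delta$). If $\beta\ll\eps$ the pigeonhole fails to produce a \emph{degular} $Z'$ meeting your threshold. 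The fix is to aim for the much weaker threshold $\ge -\delta/4$ (which still suffices for $d(X_+,Z')-d(X_-,Z')\ge\delta/2$): then the pigeonhole yields a $\Theta(\delta)$-fraction of candidates, which comfortably exceeds the non-degular fraction because $4000\eps<\delta$.

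Second, even with the corrected threshold, you cannot carry out the pigeonhole inside a \emph{single} fixed $X_i$. The number of clusters $Z''\subseteq_\beta X_i$ is roughly $\ell/m_{r-1}$, while the degular partition only guarantees that at most $\eps\ell$ clusters are non-degular with $Z_0$; since $m_{r-1}$ can dwarf $\eps^{-1}$, it is entirely possible that \emph{all} clusters inside one $X_i$ are non-degular with $Z_0$. You therefore must aggregate over the $\ge D_{r-1}/15\ge m_{r-1}/60$ contributing $X_i$'s (intersected with the ones made of mostly useful vertices, as in Claim~\ref{clm:contr}), note that their associated $\mathcal{Z}_i$'s are disjoint and jointly contain $\Omega(\ell)$ clusters, and then argue by averaging that \emph{some} contributing $X_i$ has non-degular fraction $O(\eps)=o(\delta)$. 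Only in that $X_i$ does the pigeonhole deliver the desired degular $Z'$. This selection step — or, equivalently, the paper's complementary counting over all contributing $X_i$'s — is the part your proposal hand-waves with ``enough candidates remain,'' and it is where the hypothesis $4000\eps<\delta$ actually earns its keep.
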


\begin{proof}
Suppose that there is an $\varepsilon$-degular partition $\mathcal{Z}$ of complexity $\ell$ of the weighted graph $G$ and for the sake of contradiction that there exists $Z_0\in\mathcal{Z}$ with $Z_0\subseteq_{\beta} X_{j^*}$ for some $X_{j^*}\in\mathcal{X}_{r-1}$, but $Z_0\not\subseteq_{\beta+8\mu} X_{j^*,t}$ for every $t\in[M_r]$. From now on we fix this $Z_0$ and $X_{j^*}$.

At the end of the proof, we will show that there are more than $\eps\ell$ many sets $Z\in\mathcal{Z}$ such that $(Z,Z_0)$ is not $\eps$-degular showing that $\mathcal{Z}$ violates the degularity counterpart of~\ref{P:3} and is therefore not an $\varepsilon$-degular partition. We need quite some preparations to obtain this contradiction.

We say that a vertex $v\in [n]$ is \emph{useful} if for the unique $X_i\in\mathcal{X}_{r-1}$ with $X_i\ni v$ and for the unique $Z\in\mathcal{Z}$ with $Z\ni v$ we have $Z\subseteq_{\beta} X_i$. The following claim is crucial for the proof.

\begin{claim}\label{clm:contr}
There are at least $m_{r-1}/120$ many sets $X_i\in\mathcal{X}_{r-1}$ which are \emph{contributing}, i.e. 
\begin{enumerate}[label=(\roman*)]
\item\label{clm:usef} all but at most $(1-120\beta)|X_i|$ many vertices of $X_i$ are useful,
\item $(X_i,X_{j^*})\in T_{r-1}$,
\item\label{Ver3} $|Z_0\cap A_{j^*,i}^r|,|Z_0\cap (X_{j^*}\setminus A^r_{j^*,i})|>\mu|Z_0|$.
\end{enumerate}
\end{claim}
\begin{proof}[Proof of Claim~\ref{clm:contr}]
For each $t\in[M_{r}]$ we set $\lambda_t=|Z_0\cap X_{j^*,t}|/|Z_0\cap X_{j^*}|\geq 0$. Let $\lambda=(\lambda_1,\cdots,\lambda_{M_{r}})$. Note that $\lVert\lambda\rVert_1=1$ and 
$$\lVert\lambda\rVert_\infty<\frac{(1-(\beta+8\mu))|Z_0|}{(1-\beta)|Z_0|}=1-8\cdot\frac{\mu}{1-\beta}\;,$$
as $Z_0\subseteq_{\beta}X_{j^*}$ and $Z_0\not\subseteq_{\beta+8\mu}X_{j^*,t}$ for every $t\in [M_{r}]$. Recall that $|\NIn_{T_{r-1}}(X_{j^*})|=D_{r-1}$ and that the sets $\{A^r_{j^*,i}\}_{X_i\in \NIn_{T_{r-1}}(X_{j^*})}$ are based on an $(M_{r},D_{r-1})$-separator. Hence by Lemma~\ref{lem:numbbip} (with $\zeta=\mu/(1-\beta)$) we have at least $D_{r-1}/15$ many sets $X_i\in \NIn_{T_{r-1}}(X_{j^*})$ such that 
\begin{equation*}
|Z_0\cap A_{j^*,i}^r|,|Z_0\cap (X_{j^*}\setminus A^r_{j^*,i})|\ge \frac{\mu}{1-\beta}\cdot |Z_0\cap X_{j^*}| \ge \mu\cdot |Z_0| \;,
\end{equation*}
as is needed for Item~\ref{Ver3}.

Since $\mathcal{Z}$ is a $\beta$-refinement of $\mathcal{X}_{r-1}$, there are in total at most $\beta n$ many vertices which are not useful. Hence there are at most $m_{r-1}/120$ sets $X_i\in\mathcal{X}_{r-1}$ containing more than $120\beta|X_i|$ non-useful vertices. Thus in total there are at least 
$$D_{r-1}/15-m_{r-1}/120\geByRef{eq:Dm} m_{r-1}/60-m_{r-1}/120=m_{r-1}/120$$
many contributing sets.
\end{proof}

Now consider an arbitrary contributing set $X_{i}\in\mathcal{X}_{r-1}$.
Set $Z_A= Z_0\cap A^r_{j^*,i}$ and $Z_B=Z_0\cap (X_{j^*}\setminus A^r_{j^*,i})$. First note that by~\eqref{eq:homog} we have $d_{G_{r'}}(Z_A,X_{i})=d_{G_{r'}}(Z_B,X_{i})$ for every $r'<r$, as $Z_A$ and $Z_B$ are always part of the same cell in $\mathcal{X}_{r'}$.
Furthermore, we have for every $x_a\in Z_A$ and $x_b\in Z_B$ by~\eqref{eq:defGr} that $d_{G_r}(x_a,X_{i})=\delta$ and $d_{G_r}(x_b,X_i)=0$ and by Observation~\ref{obs:deg} that $\sum_{r'=r+1}^sd_{G_{r'}}(x_a,X_{i})=\tfrac{1}{2}\delta(s-r)=\sum_{r'=r+1}^sd_{G_{i}}(x_b,X_{i})$. Together this leads to
\begin{align}
\begin{split}
\label{eq:degreesum}
&d_G(Z_A,X_i)-d_G(Z_B,X_{i})\\
&~~=d_{G_r}(Z_A,X_{i})-d_{G_r}(Z_B,X_{i})+\sum_{i=r+1}^s\left(d_{G_{i}}(Z_A,X_{i})-d_{G_{i}}(Z_B,X_i)\right)=\delta\;.
\end{split}
\end{align}

Let $\mathcal{Z}_i=\{Z\in\mathcal{Z}\mid Z\subseteq_\beta X_i\}$. By Claim~\ref{clm:contr}\ref{clm:usef} we have
\begin{equation}\label{eq:IamgladImotivatedmyselftoworklateatnight}
|\mathcal{Z}_i|\geq \frac{|X_i|(1-120\beta)}{|Z_0|}=\frac{\frac{n}{m_{r-1}}(1-120\beta)}{\frac{n}{\ell}}=\frac{\ell (1-120\beta)}{m_{r-1}}\;.
\end{equation}

Consider $\widetilde{X}_i=\bigcup_{Z\in\mathcal{Z}_i} Z$ and observe for any $Z'\subseteq Z_0$ that (by using $\beta<1/3200$)
\begin{equation}\label{eq:approxbyuse}
|d_G(Z',X_i)-d_G(Z',\widetilde{X}_i)|\leq \frac{|X_i\triangle\widetilde{X}_i|}{|X_i\cap \widetilde{X}_i|}\leq\frac{120\beta|X_i|+\beta|X_i|}{(1-120\beta)|X_i|}<130\beta\;.
\end{equation}

Define 
$$\mathcal{Z}_i^{\varepsilon}=\{Z\in\mathcal{Z}_i\mid\;(Z,Z_0)\textrm{ is }\varepsilon\textrm{-degular}\}\;.$$
We have
\begin{align*}
\delta&\eqByRef{eq:degreesum}
|d_G(Z_A,X_i)-d_G(Z_B,X_i)|\\
&\overset{\eqref{eq:approxbyuse}}{<}\left|d_G(Z_A,\widetilde{X}_i)-d_G(Z_B,\widetilde{X}_i)\right|+260\beta\\
&\leq \tfrac{1}{|\mathcal{Z}_i|}\sum_{Z\in\mathcal{Z}_i}|d_G(Z_A,Z)-d_G(Z_B,Z)|+260\beta\\
&\leq \tfrac{1}{|\mathcal{Z}_i|}\sum_{Z\in\mathcal{Z}^{\eps}_i}\left(|d_G(Z_A,Z)-d_G(Z_0,Z)|+|d_G(Z_B,Z)-d_G(Z_0,Z)|\right)\\
&\qquad+\tfrac{1}{|\mathcal{Z}_i|}\sum_{Z\in\mathcal{Z}_i\setminus \mathcal{Z}^{\eps}_i}|d_G(Z_A,Z)-d_G(Z_B,Z)|+260\beta\;.
\end{align*}
We can use Fact~\ref{fact:degoneside} on each summand with $Z\in\mathcal{Z}^{\eps}_i$. To this end, we use~Claim~\ref{clm:contr}\ref{Ver3} which tells us that both sets $Z_A$ and $Z_B$ span at least a $\mu$-fraction of $Z_0$. For each summand $Z\in\mathcal{Z}_i\setminus \mathcal{Z}^{\eps}_i$ we use that the densities are between $0$ and $0.9+s\delta\le 1$. Therefore,
\begin{align*}
\delta&{\leq}2\tfrac{|\mathcal{Z}_i^{\varepsilon}|}{|\mathcal{Z}_i|}(\varepsilon+\tfrac{\varepsilon}{\mu})+(1-\tfrac{|\mathcal{Z}_i^{\varepsilon}|}{|\mathcal{Z}_i|})1+260\beta\;.
\end{align*}
Using the assumptions of the lemma, we get $|\mathcal{Z}^\eps_i|\le (1-\frac{1}{4}\delta)|\mathcal{Z}_i|$. Hence, for each contributing $X_i$, we have at least 
\[
\frac{\delta}{4}|\mathcal{Z}_i|\geByRef{eq:IamgladImotivatedmyselftoworklateatnight}\frac{\delta}{5}\ell/m_{r-1}
\]
many pairs $(Z,Z_0)$ with $Z\in\mathcal{Z}_i$ that are not $\eps$-degular. The families $\mathcal{Z}_i$ are mutually disjoint for different contributing $X_i$'s. Therefore by Claim~\ref{clm:contr} we have in total at least $m_{r-1}/120 \cdot \frac{\delta}{5}
\ell/m_{r-1}=\delta\ell/600>\varepsilon\ell$ many non-$\eps$-degular pairs $(Z_0,Z)$ with $Z\in\mathcal{Z}$. Hence, $\mathcal{Z}$ violates the degularity counterpart of~\ref{P:3}, a contradiction.
\end{proof}

We extend the definition of the tower function from base~2 to an arbitrary base $a>1$, $\tower_a:[0,\infty)\to(0,+\infty)$, $\tower_a(x)=1$ for $x\in[0,1)$ and $\tower_a(1+x)=a^{\tower(x)}$ for $x\ge 1$. Note that for every $a>0$ we have $\tower_a(s)\ge \tower(s-O(1))$ as $s\to\infty$.

\begin{theorem}\label{thm:weightedgraph}
There exists $\varepsilon_0$ such that for every $\varepsilon\in (0,\varepsilon_0)$ and every $N\in\mathbb N$ there exists a weighted graph $G$ of order $n\geq N$ such that every non-trivial $\varepsilon$-degular partition $\mathcal{Z}$ of $G$ has complexity at least $\tower_a(s)$ where $a=2^{1/(4\cdot 9999)}$ and $s=\varepsilon^{-1/3}\cdot 10^{-6}$. 
\end{theorem}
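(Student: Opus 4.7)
The plan is to instantiate Construction~\ref{constr:G} with carefully chosen parameters and then iteratively apply Lemma~\ref{lem:betaref} to show that any $\varepsilon$-degular partition must almost refine the deepest partition $\mathcal{X}_s$, which already has a tower-sized number of cells.

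\textbf{Parameter choice.} Given small $\varepsilon>0$, I would set $s:=\lceil 10^{-6}\varepsilon^{-1/3}\rceil+1$, pick $\delta$ as a small constant multiple of $\varepsilon^{1/3}$ so that $\delta<1/2$, $4000\varepsilon<\delta$, and $s\delta\le 0.1$ hold simultaneously, and put $\mu:=\delta/(12800\,s)$, which makes $32\varepsilon/\mu<\delta$ as well. With $\beta_s:=\beta_0+8\mu s$ at the last level, the constraint $1600\beta_s<\delta$ holds provided the base-case parameter $\beta_0$ is chosen as a sufficiently small constant fraction of $\delta$. I then take $n\ge N$ with $m_s\mid n$ and form $G:=G(n,s,\delta)$.

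\textbf{Base case.} Let $\mathcal{Z}$ be an arbitrary non-trivial $\varepsilon$-degular partition of $G$, of complexity $\ell$. The first task is to show that $\mathcal{Z}$ is a $\beta_0$-refinement of $\mathcal{X}_0=\{X_1,X_2\}$. This uses the auxiliary graph $G_1$, which places weight-$0.1$ edges inside $X_1$, weight-$0.9$ edges inside $X_2$, and no edges across. If some $Z_0\in\mathcal{Z}$ violated the refinement, with $|Z_0\cap X_1|,|Z_0\cap X_2|>\beta_0|Z_0|$, then setting $Z_A:=Z_0\cap X_1$ and $Z_B:=Z_0\cap X_2$ one checks that $d_{G_1}(Z_A,X_1)-d_{G_1}(Z_B,X_1)=0.1$, while Observation~\ref{obs:deg} bounds the $G_r$ ($r\ge 2$) contribution to $d_G(Z_A,X_1)-d_G(Z_B,X_1)$ by $O(s\delta)\le 0.05$, leaving a constant gap. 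Running the averaging argument from the proof of Lemma~\ref{lem:betaref} with this constant gap in place of $\delta$ (and with the role of contributing cells played by a constant proportion of clusters $Z\in\mathcal{Z}$ sitting mostly inside $X_1$ or inside $X_2$) then produces more than $\varepsilon\ell$ pairs $(Z_0,Z)$ failing $\varepsilon$-degularity, a contradiction.

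\textbf{Iteration and conclusion.} With the base case in hand, I iterate Lemma~\ref{lem:betaref}. Defining $\beta_r:=\beta_0+8\mu r$, the lemma promotes a $\beta_{r-1}$-refinement of $\mathcal{X}_{r-1}$ to a $\beta_r$-refinement of $\mathcal{X}_r$, and all of its hypotheses hold at every level by the parameter choices above. Thus $\mathcal{Z}$ is a $\beta_s$-refinement of $\mathcal{X}_s$ with $\beta_s<1/2$; since both partitions are equipartitions of $[n]$ and $\mathcal{X}_s$ has $m_s$ cells of size $n/m_s$, the refinement forces $(1-\beta_s)|Z|\le n/m_s$ for every $Z\in\mathcal{Z}$, hence $\ell\ge (1-\beta_s)m_s\ge m_s/2$. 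The recurrence $m_r\ge 2^{\lceil m_{r-1}/(4\cdot 9999)\rceil}\ge a^{m_{r-1}}$ together with $m_0=2$ yields $m_s\ge\tower_a(s)$ by induction, and the factor $1/2$ together with any fractional loss in the definition of $s$ are absorbed by the extra $+1$ in $s$, using that $\tower_a(s+1)\ge 2\tower_a(s)$ once $s$ is large.

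The main obstacle is the base case: the inductive machinery of Lemma~\ref{lem:betaref} needs an initial $\mathcal{X}_0$-refinement as input, and in the degular (as opposed to regular) setting this cannot come for free from any ``subpair is regular''-style principle. It must be extracted from the auxiliary graph $G_1$ by a dedicated averaging argument, and the parameter $\delta$ must be chosen small enough that the cumulative $G_r$ ($r\ge 2$) contributions cannot cancel the $G_1$ signal.
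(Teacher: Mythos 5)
Your overall strategy---establish a base-level refinement of $\mathcal{X}_0$, then iterate Lemma~\ref{lem:betaref} to descend to $\mathcal{X}_s$ and read off the tower bound from $m_s$---is exactly the paper's, and your parameter choices and the final $m_s\ge\tower_a(s)$ computation are fine modulo routine constant adjustments (note that with $\mu=\delta/(12800s)$ one gets $1600\cdot8\mu s=\delta$, not strictly less, so $\mu$ must be shrunk slightly to leave room for $\beta_0$).

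The genuine gap is in your base case. You propose to rerun the averaging argument of Lemma~\ref{lem:betaref}, ``with the role of contributing cells played by a constant proportion of clusters $Z\in\mathcal{Z}$ sitting mostly inside $X_1$ or inside $X_2$.'' But that averaging argument is built on the input hypothesis that $\mathcal{Z}$ already $\beta$-refines $\mathcal{X}_{r-1}$: the families $\mathcal{Z}_i$ of clusters nearly contained in $X_i$ are used to approximate $X_i$ by a union of clusters (Equations~\eqref{eq:IamgladImotivatedmyselftoworklateatnight} and~\eqref{eq:approxbyuse}). At the base level there is no such hypothesis---a priori every cluster of $\mathcal{Z}$ could be split $50$--$50$ between $X_1$ and $X_2$, in which case no cluster sits ``mostly inside'' either half, and the machinery you want to invoke has no contributing cells to sum over. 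What you are implicitly assuming is precisely what the base case must prove. A secondary issue is your appeal to Observation~\ref{obs:deg} to control $\sum_{r\ge2}d_{G_r}(Z_A,X_1)$ for $Z_A\subseteq X_1$: that observation requires $x\notin X$, so it applies to $Z_B\subseteq X_2$ but not directly to $Z_A$; and the trivial bound alone only yields $|{\sum_{r\ge2}(d_{G_r}(Z_A,X_1)-d_{G_r}(Z_B,X_1))}|\le (s-1)\delta\le0.1$, which exactly matches the $G_1$-signal of $0.1$ and hence gives no strict gap.

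The paper avoids both problems by abandoning the density-averaging framework for the base case and arguing directly with vertex degrees. If $Z_0$ meets both $X_1$ and $X_2$ substantially, pick non-exceptional $x_1\in Z_0\cap X_1$ and $x_2\in Z_0\cap X_2$. For any $Z$ with $(Z_0,Z)$ $\eps$-degular one has $\deg(x_1,Z)=\deg(x_2,Z)\pm2\eps|Z|$, while the crude bound $\deg_G(\cdot,Z)=\deg_{G_1}(\cdot,Z)\pm0.1|Z|$ together with $\deg_{G_1}(x_1,Z)=0.1|Z\cap X_1|$ and $\deg_{G_1}(x_2,Z)=0.9|Z\cap X_2|$ (here the deliberate $0.1$ vs.\ $0.9$ asymmetry of $G_1$ is essential) forces $|Z\cap X_1|\ge0.6|Z|$ for \emph{every} degular partner $Z$. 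Summing over the at least $(1-\eps)\ell$ such $Z$ gives $|X_1|>n/2$, a contradiction. This needs no prior knowledge of how $\mathcal{Z}$ aligns with $\{X_1,X_2\}$, which is the whole point. You correctly identified that the base case is the crux, but the averaging route you sketch does not close it; you need a degree-level argument of this kind.
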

\begin{proof}
Set $\delta=\varepsilon^{1/3}$, $\mu=33\varepsilon^{2/3}$ and $s=\lfloor\varepsilon^{-1/3}/10^6\rfloor$. For $\varepsilon$ small enough the following holds. Choose $n\geq \max\{m_s,N\}$ with $m_s\mid n$ and let $G$ be the weighted graph given by Construction~\ref{constr:G} with parameters $\delta$, $s$ and $n$ and let $\mathcal{Z}$ be a non-trivial $\varepsilon$-degular partition of $G$ of complexity $\ell$.
First we show that $\mathcal{Z}$ is an $\varepsilon$-refinement of $\mathcal{X}_0=\{X_1,X_2\}$. Note that by construction and choice of $\delta$ and $s$ we have for every $X\subseteq V(G)$ and $x\in V(G)$ that
\begin{equation}\label{eq:degG0}
\deg_G(x,X)=\sum_{r\in[s]}\deg_{G_r}(x,X)=\deg_{G_1}(x,X)\pm 0.1|X|\;.
\end{equation}
Now suppose that there exists $Z_0\in\mathcal{Z}$ such that $Z_0\not\subseteq_{\varepsilon}X_i$ for $i\in[2]$. Then for every other $Z\in\mathcal{Z}\setminus\{Z_0\}$ let $A^-\subseteq Z_0$ with $|A^-|\leq\varepsilon|Z_0|$ and $B^-\subseteq Z$ with $|B^-|\leq\varepsilon|Z|$ be such that \eqref{eq:defdegular} holds. Since there exist $x_1\in (Z_0\setminus A^{-})\cap X_1$ and $x_2\in (Z_0\setminus A^{-})\cap X_2$ we have by~\eqref{eq:degG0} and by~\eqref{eq:defdegular} for every $Z\in\mathcal{Z}$ such that $(Z_0,Z)$ is $\varepsilon$-degular that
$$0.1|Z\cap X_1|\pm0.1|Z|=\deg(x_1,Z)=\deg(x_2,Z)\pm 2\varepsilon|Z|=0.9|Z\cap X_2|\pm0.1|Z|$$
implying that $|Z\cap X_1|\geq 0.6|Z|$. This leads to
$$|X_1|\geq \sum_{Z\in\mathcal{Z}}|Z\cap X_1|\geq (1-\varepsilon)\ell\cdot \tfrac{0.6n}{\ell}>n/2$$
contradicting that $|X_1|=|X_2|=n/2$.

Note that $s\delta\leq 0.1$, $32\varepsilon/\mu<\varepsilon^{1/3}=\delta$, $4000\varepsilon<\delta$ and for every $r\in[s]$ and $\beta=\varepsilon+8\mu r$ we have $1600\beta=1600\cdot (\varepsilon+8\mu r)<1600\cdot 9\mu s\leq\tfrac{1600\cdot 9\cdot 33}{10^6}\varepsilon^{1/3}<\delta$. Therefore by applying Lemma~\ref{lem:betaref} repeatedly we get that $\mathcal{Z}$ is an $(\varepsilon+r\cdot 8\mu)$-refinement of $\mathcal{X}_r$ for every $r\in[s]$. Hence $\mathcal{Z}$ is a $(\varepsilon+s\cdot 8\mu)\leq1/2$-refinement of $\mathcal{X}_s$ and therefore $|\mathcal{Z}|\geq |\mathcal{X}_s|/2=m_s/2\geq\tower_a(s)$, as was needed.
\end{proof}

Finally, we can show Theorem~\ref{thm:main} by turning the weighted graph from Construction~\ref{constr:G} into an unweighted graph using a simple probabilistic argument.

\begin{proof}[Theorem~\ref{thm:weightedgraph} implies Theorem~\ref{thm:main}]

Given $\eps'>0$ sufficiently small. We set $\eps=4\eps'$ and choose a constant $\zeta$ sufficiently small with respect to $\eps$ and $n$ sufficiently large, i.e.
$$\tfrac{1}{n}\ll\zeta\ll\varepsilon\;.$$

We define $s$ and $a$ as in Theorem~\ref{thm:weightedgraph}. In particular we may assume that $20\zeta^{-2}\log(n)<\varepsilon n/\ell$ for every $\ell\leq \tower_a(s)$. The following claim taken from~\cite{MR3516883} is well-known and follows immediately from Chernoff's inequality.

\begin{claim}[Claim 2.1 from~\cite{MR3516883}]\label{clm:weightedsel}
    Let $\zeta>0$. Suppose $G$ is a weighted complete graph on $n$ vertices with weights in $[0,1]$, and $G'$ is a random graph, where each edge $(x,y)$ is chosen independently to be included in $G'$ with probability $d_G(x, y)$. Then with probability at least $1/2$ we have $|d_{G'}(A, B)-d_G(A,B)|\leq\zeta$ for all sets $A,B$ of size at least $20\zeta^{-2}\log(n)$.
\end{claim}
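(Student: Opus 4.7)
My plan for Claim~\ref{clm:weightedsel} is the standard Chernoff-plus-union-bound argument that is folklore in the random-graph literature. For any fixed pair of vertex subsets $A, B \subseteq V(G)$, the edge count $e_{G'}(A, B)$ is a sum of at most $|A||B|$ independent Bernoulli random variables, indexed by pairs $(a, b) \in A \times B$ with $a \neq b$ and each with success probability $G(a, b) \in [0, 1]$; its expectation is exactly $e_G(A, B) = |A||B|\, d_G(A, B)$. Chernoff's inequality in the Hoeffding form then yields
\begin{equation*}
\Pr\bigl[\,|d_{G'}(A, B) - d_G(A, B)| > \zeta\,\bigr] \leq 2\exp\bigl(-2\zeta^{2} |A||B|\bigr).
\end{equation*}

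For the uniform conclusion I would take a union bound over all pairs of vertex subsets $(A, B)$ of $V(G)$ with $|A|, |B| \geq 20\zeta^{-2}\log n$. There are at most $4^n$ such pairs, and for each of them the size assumption forces $|A||B|\geq 400\zeta^{-4}(\log n)^2$, so the per-pair failure probability is at most $2\exp(-800\zeta^{-2}(\log n)^2)$. Multiplying by $4^n$ and invoking the calibrated threshold $20\zeta^{-2}\log n$---chosen precisely so that the Chernoff exponent, of order $\zeta^{-2}(\log n)^2$, exceeds the union-bound count $\log(4^n) = O(n)$ in the parameter regime of interest---gives total failure probability at most $1/2$. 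Taking the good event yields a fixed realization of $G'$ satisfying the claimed density preservation for every pair of subsets above the threshold simultaneously.

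I do not expect any genuine obstacle; the proof is routine, which is why the paper quotes it rather than reproducing it. The only calibration to double-check is that the quadratic-in-$\log n$ Chernoff exponent beats the linear-in-$n$ union-bound count for the range of $\zeta$ and $n$ that will actually be used downstream (the subsequent argument fixes $\zeta$ and then takes $n$ large enough that the required inequalities $20\zeta^{-2}\log n < \eps n/\ell$ and the closure of the union bound both hold). The factor of $20$ in the threshold is deliberately generous to absorb these constants cleanly.
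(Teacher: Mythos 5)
Your Chernoff bound for a single pair $(A,B)$ is correct, but the union bound as you carry it out does not close, and this is a genuine gap, not a constant-tracking issue. You bound the number of pairs by $4^n$ and the failure probability of \emph{every} pair by the worst case $2\exp(-800\zeta^{-2}(\log n)^2)$, and then assert that the exponent $\zeta^{-2}(\log n)^2$ beats $\log(4^n)=\Theta(n)$. It does not: for fixed $\zeta$ and $n\to\infty$ we have $(\log n)^2 = o(n)$, so $4^n\cdot 2\exp(-800\zeta^{-2}(\log n)^2)\to\infty$. The downstream application is exactly this regime (it fixes $\zeta$ in terms of $\eps$ and then takes $n$ tower-large), so the ``calibration'' you flag at the end is not a loose end --- it genuinely fails.

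The standard fix is to stratify the union bound by set sizes rather than using the crude count $4^n$ together with the weakest per-pair bound. Write $k:=\lceil 20\zeta^{-2}\log n\rceil$. The number of pairs with $|A|=a$, $|B|=b$ is at most $\binom{n}{a}\binom{n}{b}\le n^{a+b}$, and for such a pair the Hoeffding bound gives failure probability at most $2\exp(-2\zeta^2 ab)$. Since $a,b\ge k$, one has
\[
2\zeta^2 ab \;=\; \zeta^2 ab+\zeta^2 ab\;\ge\;\zeta^2 k\,(a+b)\;\ge\;20(a+b)\log n,
\]
so $n^{a+b}\cdot 2\exp(-2\zeta^2 ab)\le 2 n^{-19(a+b)}$. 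Summing over all $a,b\ge k\ge 1$ gives total failure probability at most $2\bigl(\sum_{a\ge 1}n^{-19a}\bigr)^2\le 8n^{-38}\le 1/2$ for $n\ge 2$. (Equivalently, one can union-bound only over pairs of size exactly $k$, of which there are at most $n^{2k}$, and then extend to larger sets by noting that $d(A,B)$ is the average of $d(A',B')$ over all $A'\subseteq A$, $B'\subseteq B$ with $|A'|=|B'|=k$.) Either route uses the crucial point you missed: larger sets come with a much stronger Chernoff bound, which is what compensates for the larger number of such sets.
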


Let $G$ be given by Theorem~\ref{thm:weightedgraph} such that every non-trivial $\varepsilon$-degular partition of $G$ has complexity at least $\ell>\tower_a(s)$. Let $G'$ be a graph, where each edge $(x,y)$ is chosen independently to be included in $G'$ with probability $d_G(x, y)$, and where we fix an outcome of this random selection such that the assertion of Claim~\ref{clm:weightedsel} holds. Now to show Theorem~\ref{thm:main} it suffices to show that every $\varepsilon'$-degular partition of $G'$ of complexity $\ell\leq\tower_a(s)$ is also an $\varepsilon$-degular partition of $G$ and therefore implying that its complexity is at least $\tower_a(s)$. For this suppose that $(A,B)$ is a $\varepsilon'$-degular pair in $G'$ and such that $|A|=|B|=n/\ell$. We show that $(A,B)$ is also an $4\varepsilon'$-degular pair in $G$. 

Let $A^*:=\{a\in A \::\: |\deg_{G'}(a,B)-\frac{e_{G'}(A,B)}{|A|}|>4\varepsilon'|B|\}$ and $B^*$ be defined similarly. We need to prove that $|A^*|\le 4\varepsilon'|A|$ and $|B^*|\le 4\varepsilon'|B|$. We will show this argument only for $A^*$, the argument for $B^*$ follows \emph{mutatis mutandis}. Suppose for contradiction that $|A^*|> 4\varepsilon'|A|$.

Let $A^{-}\subseteq A$ with $|A^{-}|\leq\varepsilon'|A|$ be such that $\deg_{G}(a,B)=\frac{e_{G}(A,B)}{|A|}\pm\varepsilon'|B|$ for every $a\in A\setminus A^-$.  Without loss of generality we may then assume that there is a set $A'\subseteq A^*$ of size at least $2\varepsilon'|A|$ and such that $\deg_{G'}(a,B)>\frac{e_{G'}(A,B)}{|A|}|+4\varepsilon'|B|$ for every $a\in A'$. Let $\tilde{A}=A'\setminus A^-$. We have $|\tilde{A}|>\eps |A|$ and
\begin{align*}
d_{G}(\tilde{A},B)&=\\
\JUSTIFY{$|\tilde{A}|>\varepsilon' n/\ell>20\zeta^{-2}\log(n)$, Claim~\ref{clm:weightedsel} applies}&=d_{G'}(\tilde{A},B)\pm\zeta\\
\JUSTIFY{$(A,B)$ is $\eps'$-degular in $G'$}&=d_{G'}(A,B)\pm(\varepsilon'+\zeta)\\
\JUSTIFY{Claim~\ref{clm:weightedsel}}&=d_{G}(A,B)\pm(\varepsilon'+2\zeta)<d_{G}(A,B)+4\varepsilon'\\
\JUSTIFY{definition of $\tilde{A}$}&<d_G(\tilde{A},B)\;,
\end{align*}
a contradiction. 
\end{proof}

\section*{Acknowledgments}
We thank Timothy Gowers for discussions. 

\bibliographystyle{plain}
\bibliography{DSG}

\end{document}